\numberwithin{equation}{section}
\newcommand{\bigpare}[1]{\bigl(#1\bigr)}
\newcommand{\biggpare}[1]{\biggl(#1\biggr)}
\newcommand{\biggbrac}[1]{\biggl[#1\biggr]}
\newcommand{\bignorm}[1]{\bigl\| #1 \bigr\|}
\newcommand{\biggnorm}[1]{\biggl\| #1 \biggr\|}
\newcommand{\bigabs}[1]{\bigl| #1 \bigr|}
\newcommand{\biggabs}[1]{\biggl| #1 \biggr|}
\newcommand{\jap}[1]{\langle #1 \rangle}
\def\a{\alpha}
\def\b{\beta}
\def\d{\delta}
\def\e{\varepsilon}
\def\f{\varphi}
\def\g{\psi}
\def\l{\lambda}
\def\m{\mu}
\def\n{\nu}
\def\s{\sigma}
\def\t{\tau}
\def\x{\xi}
\def\y{\eta}
\def\z{\zeta}
\def\re{\mathbb{R}}
\def\ze{\mathbb{Z}}
\def\pa{\partial}
\newcommand{\supp}{\text{{\rm supp}\;}}
\newtheorem{thm}{Theorem}[section]
\newtheorem{lem}[thm]{Lemma}
\theoremstyle{definition}
\newtheorem{ass}{Assumption}
\theoremstyle{remark}
\newtheorem{rem}[thm]{Remark}
\newcommand{\WF}{W\!F}
\title{Wave front set for solutions to 
perturbed harmonic oscillators}
\author{Shikuan MAO\footnote{Graduate School of Mathematical Sciences, 
University of Tokyo, 3-8-1, Komaba, Meguro, Tokyo, Japan 153-8914. 
Email: {\tt  shikuan@ms.u-tokyo.ac.jp}} %
and Shu NAKAMURA\footnote{Graduate School of Mathematical Sciences, 
University of Tokyo, 3-8-1, Komaba, Meguro, Tokyo, Japan 153-8914. 
Partially supported by JSPS Grant Kiban (B) 17340033. 
Email: {\tt  shu@ms.u-tokyo.ac.jp}}}
\begin{document}
\maketitle

\begin{abstract}
We consider Schr\"odinger equations with variable coefficients and
the harmonic potential. We suppose the perturbation is short-range type 
in the sense of \cite{Na1}. 
We characterize the wave front set of the solutions to 
the equation in terms of the classical scattering data 
and the propagator of the unperturbed harmonic oscillator.  
In particular, we give a ``recurrence of singularities'' type theorem 
for the propagation of the period $t=\pi$. 
\end{abstract}


\section{Introduction}
In this paper we consider a Schr\"odinger operator with variable coefficients 
and the harmonic potential: 
\[
H=-\frac12 \sum_{j,k=1}^n \pa_{x_j} a_{jk}(x) \pa_{x_k} +\frac 12 |x|^2 +  V(x)
\]
on $\mathcal{H}= L^2(\re^n)$, $n\geq 1$. We denote the unperturbed harmonic oscillator 
by $H_0$:
\[
H_0=-\frac12 \triangle +\frac12 |x|^2 \qquad \text{on $\mathcal{H}$}, 
\]
and we suppose $H$ is a short-range perturbation of $H_0$ in the 
following sense:

\begin{ass}
$a_{jk}(x)$, $V(x)\in C^\infty(\re^n;\re)$ for $j,k=1,\dots, n$, and 
$(a_{jk}(x))_{j,k}$ is positive symmetric for each $x\in\re^n$. 
Moreover, there exists $\m>1$ such that for any $\a\in\ze_+^n$, 
\begin{align*}
&\bigabs{\pa_x^\a (a_{jk}(x)-\d_{jk})} \leq C_\a\jap{x}^{-\m-|\a|}, \\
& \bigabs{\pa_x^\a V(x)} \leq C_\a\jap{x}^{2-\m-|\a|}
\end{align*}
for $x\in\re^n$ with some $C_\a>0$, where $\jap{x}=\sqrt{1+|x|^2}$. 
\end{ass}

Then it is well-known that $H$ is essentially self-adjoint on $C_0^\infty(\re^n)$, 
and we denote the unique self-adjoint extension by the same symbol $H$. 
We denote the symbols of $H$, $H_0$, the kinetic energy and the free Schr\"odinger 
operator by $p$, $p_0$, $k$ and $k_0$, respectively. Namely, we denote
\begin{align*}
&p(x,\x)=\frac12 \sum_{j,k=1}^n a_{jk}(x)\x_j\x_k +\frac12 |x|^2 +V(x), 
\quad  p_0(x,\x) = \frac12 |\x|^2 +\frac12 |x|^2, \\
& k(x,\x)= \frac12 \sum_{j,k=1}^n a_{jk}(x)\x_j\x_k , 
\quad  k_0(x,\x)=\frac12 |\x|^2. 
\end{align*}
We denote the Hamilton flow generated by a symbol $a(x,\x)$ on $\re^{2n}$ by 
$\exp (tH_a)$ : $\re^{2n}\to\re^{2n}$. We also denote
\[
\pi_1(X)=x, \quad \pi_2(X)=\x \quad \text{for $X=(x,\x)\in\re^{2n}$}.
\]

Let $(x_0,\x_0)\in\re^{2n}$. $(x_0,\x_0)$ is called {\em forward (backward, resp.)
nontrapping}\/ (with respect to $k$) if 
\[
|\pi_1(\exp(tH_k)(x_0,\x_0)| \to \infty
\]
as $t\to+\infty$ ($t\to-\infty$, resp.). If $(x_0,\x_0)$ is forward/backward 
nontrapping, then it is well-known 
\[
(x_\pm,\x_\pm) =\lim_{t\to\pm\infty} \exp(-tH_{k_0})\circ \exp(tH_k)(x_0,\x_0)
\]
exists, and $S_\pm$: $(x_0,\x_0)\mapsto (x_\pm,\x_\pm)$ are locally 
diffeomorphic (see, e.g., Nakamura~\cite{Na1}, Section~2.). 

Now we present our main results of this paper. Let us recall our harmonic oscillator 
$H_0$ has a period $2\pi$, i.e., 
$e^{-i 2\pi H_0}\f =\f$ for $\f\in\mathcal{H}$. Moreover, we have 
\[
e^{\mp i\pi H_0}\f(x)=\f(-x), \quad \f\in\mathcal{H}.
\]
Our first result concern the evolution by $H$ up to time $\pi$. We denote
\[
u(t)= e^{- itH}u_0, \quad u_0\in \mathcal{H}.
\]
We denote the wave front set of a distribution $f$ by $\WF(f)$. 

\begin{thm}
(i) Suppose $(x_0,\x_0)$ is backward nontrapping, and let $0<t_0<\pi$, 
$u_0\in\mathcal{H}$. Then 
\[
(x_0,\x_0)\in \WF(u(t_0)) \quad \Longleftrightarrow \quad (x_-,\x_-)\in \WF(e^{-it_0H_0}u_0).
\]
(ii) Suppose $(x_0,\x_0)$ is forward nontrapping, and let $-\pi<t_0<0$, 
$u_0\in\mathcal{H}$. Then 
\[
(x_0,\x_0)\in \WF(u(t_0)) \quad \Longleftrightarrow \quad (x_+,\x_+)\in \WF(e^{-it_0H_0}u_0).
\]
\end{thm}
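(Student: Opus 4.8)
The plan is to reduce the statement to a known propagation-of-singularities result for long-range-type perturbations of the free Schrödinger operator via a time-dependent change of variables (the "lens transform" / Mehler-type conjugation) that turns the harmonic oscillator into the free Laplacian on a finite time interval. Concretely, recall that for $|t|<\pi$ one has the factorization $e^{-itH_0}=M_t\,\mathcal{D}_t\,e^{-i(\tan t)\triangle/2}$ up to a unitary multiplication by the quadratic phase $M_t=e^{-i(\tan t)|x|^2/2 + \cdots}$ and the $L^2$-normalized dilation $\mathcal{D}_t$ with scaling parameter $\cos t$; the new time variable is $s=\tan t$, which sweeps all of $\re$ as $t$ ranges over $(-\pi/2,\pi/2)$ (and one handles $\pi/2\le|t|<\pi$ either by an analogous Mehler formula or by the symmetry $e^{\mp i\pi H_0}\f(x)=\f(-x)$ composed with the $|t|<\pi/2$ case). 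First I would verify that Assumption A is preserved in a suitable $s$-dependent sense under this conjugation: the conjugated operator $\widetilde H(s)$ is a time-dependent perturbation of $-\frac12\triangle$ whose perturbing coefficients inherit the short-range decay $\jap{x}^{-\mu}$ (with $\mu>1$), uniformly on compact $s$-intervals, because the dilation $\mathcal{D}_t$ only rescales $x$ by a bounded factor. Under this transformation the wave front set transforms by the linear symplectic map underlying $M_t\mathcal{D}_t$, which is exactly the free harmonic flow $\exp(tH_{p_0})$ restricted to the "position rotated to momentum" picture.

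The second step is to invoke the characterization of $\WF$ for solutions to short-range perturbations of the free Schrödinger equation — this is precisely the content of Nakamura~\cite{Na1} (and the classical-scattering description of singularities there): a point $(x_0,\x_0)$ with $\x_0\ne 0$ lies in $\WF(v(s_0))$ for the perturbed free evolution iff the scattering-transformed point lies in $\WF$ of the free evolution of the initial data, the transformation being given by the wave operators $S_\pm$ built from $k$ and $k_0$. Pulling this back through the lens transform, the free-flow conjugation $\exp(tH_{k_0})$ combines with $e^{-is\triangle/2}$ in the $s$-picture to reproduce $e^{-it_0H_0}$ in the original picture, and the classical wave operator $S_-$ (for backward-nontrapping points, case (i)) or $S_+$ (for forward-nontrapping points, case (ii)) is exactly the map $(x_0,\x_0)\mapsto(x_\pm,\x_\pm)$ appearing in the statement. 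The sign/direction of time in the nontrapping hypothesis is dictated by which end ($s\to+\infty$ vs $s\to-\infty$, i.e. $t\uparrow\pi/2$ from below after accounting for the $\tan$) the classical trajectory escapes to spatial infinity, and the constraint $0<t_0<\pi$ versus $-\pi<t_0<0$ is what guarantees that the relevant half-trajectory stays within the single "lens chart" where the transform is nonsingular.

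The main obstacle I expect is bookkeeping the interaction between two distinct conjugations: the $s$-dependent quadratic-phase/dilation that linearizes $H_0$, and the free-flow conjugation $\exp(-tH_{k_0})$ that defines the classical scattering data. One must check that these commute with each other up to controllable lower-order terms, and in particular that the short-range perturbation remains short-range after \emph{both} are applied — the decay rate $\mu>1$ must survive, and no spurious linear-in-$x$ or quadratic growth can be generated by the phase factors. A secondary technical point is the behavior near $t_0=\pi/2$ (where $\tan t_0=\infty$): there one should either switch to the conjugation centered at $t=\pi/2$ using $e^{-i\pi H_0/2}$ as a parity-type operator, or argue directly that the relevant microlocal statement is continuous across $t_0=\pi/2$ because the backward-nontrapping classical trajectory never returns to a compact set for $t<t_0$. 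I would also need the Egorov-type theorem relating $\WF(u(t_0))$ to the propagated data to be uniform enough to patch the two charts; granting the results of \cite{Na1}, this should be routine but requires care with the domains of the local diffeomorphisms $S_\pm$.
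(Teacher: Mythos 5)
Your plan takes a genuinely different route from the paper: you propose to conjugate by the Mehler/lens transform so as to reduce the problem to a (time-dependent) perturbation of the free Schr\"odinger equation and then cite the short-range result of \cite{Na1} wholesale. The paper instead conjugates by $e^{itH_0}$ itself (exact Egorov, since $p_0$ is a quadratic form under Weyl quantization), builds an asymptotic solution of the Heisenberg equation for $G(t)=e^{itH_0}e^{-itH}f^w(x,hD_x)e^{itH}e^{-itH_0}$, and feeds in the new classical input --- the high-energy asymptotics of the scattering time evolution $S_t=\exp(-tH_{p_0})\circ\exp(tH_p)$ proved in Section~2 (Lemmas 2.1--2.3, Theorem~2.4) --- to identify the limiting principal symbol with $f\circ S_\pm^{-1}$.

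There is a real gap in your reduction, and it is exactly the difficulty the authors single out as their main novelty. After the lens transform the second-order perturbation becomes (schematically) $a_{jk}(y\cos t)-\delta_{jk}$ with $\cos t=(1+s^2)^{-1/2}$ in the new time $s=\tan t$. This is \emph{not} uniformly short range: the set where $|a_{jk}(y\cos t)-\delta_{jk}|\gtrsim 1$ is $\{|y|\lesssim (\cos t)^{-1}\}$, which grows like $\jap{s}$, so along a free trajectory $y(s)=y_0+s\eta_0$ the argument $y(s)\cos t(s)\to\eta_0$ and the metric perturbation converges to the nonzero constant $a_{jk}(\eta_0)-\delta_{jk}$ rather than vanishing. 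Thus there is no asymptotic freedom at a fixed frequency $\eta_0\neq 0$; it only reappears in the combined limit where $|\eta_0|$ also goes to infinity with the semiclassical scaling. But that combined limit is precisely what Section~2 of the paper establishes in the original coordinates, so your plan does not bypass the work --- it relocates it, and leaves it undone. In addition, the result of \cite{Na1} you invoke is for a \emph{time-independent} perturbation of $-\frac12\triangle$; after the lens transform the perturbation is time dependent (with the unfavorable behavior just described), so \cite{Na1} does not apply without a nontrivial extension. Finally, the $S_\pm$ in the theorem statement are defined from the purely kinetic symbols $k$, $k_0$, with no harmonic term, while what would come out of an \cite{Na1}-type argument in the lens picture is the scattering for the time-dependent conjugated metric; identifying the two is exactly Theorem~2.4 and is not addressed in your sketch. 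Describing these steps as ``routine'' significantly underestimates where the substance of the proof lies.
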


\begin{rem}
We note that microlocally $e^{-itH_0}$ is a rotation in the phase space. More precisely, 
for any reasonable symbol $a=a(x,\x)$, 
\[
e^{-itH_0} a^w(x,D_x) e^{itH_0} = a^w\bigpare{\cos(t) x +\sin(t) D_x, -\sin(t) x +\cos(t)D_x}, 
\]
where $a^w(x,D_x)$ denotes the Weyl-quantization of $a$. Hence, in particular, 
$(x_0,\x_0)\in \WF(e^{-itH_0}u_0)$ if and only if there exists a symbol: 
$a\in C_0^\infty(\re^{2n})$ such that $a(x_0,\x_0)\neq 0$ and 
\[
\bignorm{a^w(\cos(t)x-\sin(t)D_x,h(\sin(t)x+\cos(t)D_x))u_0}=O(h^\infty)
\]
as $h\to 0$. 
\end{rem}

At the time $t=\pm\pi$, $u(t)$ behaves differently. We denote the set of forward 
(backward, resp.) nontrapping points by $\mathcal{T}_+$ ($\mathcal{T}_-$, resp.) 
$\subset \re^{2n}\setminus 0 :=\{(x,\x)\in\re^{2n},\x\neq 0\}$. 
$S_\pm$ are diffeomorphism from $(\re^{2n}\setminus0)\setminus\mathcal{T}_\pm$ to 
$\re^{2n}\setminus 0$, and hence $S_\pm^{-1}$ are well-defined from $\re^{2n}\setminus0$ 
to $(\re^{2n}\setminus 0)\setminus\mathcal{T}_\pm$. 
We also denote the antipodal map in $\re^{2n}$ by $\Gamma$, i.e., 
$\Gamma(x,\x)=(-x,-\x)$. 

\begin{thm}
(i) Suppose $(x_0,\x_0)$ is backward nontrapping, and let $u_0\in\mathcal{H}$. Then 
\[
(x_0,\x_0)\in\WF(u(\pi))\quad \Longleftrightarrow \quad 
S_+^{-1}\circ\Gamma\circ S_-(x_0,\x_0)\in \WF(u_0).
\]
(ii) Suppose $(x_0,\x_0)$ is forward nontrapping, and let $u_0\in\mathcal{H}$. Then 
\[
(x_0,\x_0)\in\WF(u(-\pi))\quad \Longleftrightarrow \quad 
S_-^{-1}\circ\Gamma\circ S_+(x_0,\x_0)\in \WF(u_0).
\]
\end{thm}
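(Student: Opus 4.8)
The strategy is to reduce the $t=\pm\pi$ statement to Theorem 1.1 by propagating across the time $\pi$ (resp.\ $-\pi$) in two stages and exploiting the exact period-$2\pi$ structure of $H_0$. I will concentrate on part (i); part (ii) follows by the time-reversal symmetry $\overline{u(-t)} = e^{-itH}\bar u_0$ together with the corresponding symmetry of the classical flow, exactly as in the proof of Theorem 1.1. The first step is to write, for a small $\delta>0$,
\[
u(\pi) = e^{-i\delta H}\,u(\pi-\delta), \qquad u(\pi-\delta)=e^{-i(\pi-\delta)H}u_0.
\]
Since $0<\pi-\delta<\pi$, Theorem 1.1(i) applies to $u(\pi-\delta)$: for a backward nontrapping point $(y_0,\eta_0)$ one has $(y_0,\eta_0)\in\WF(u(\pi-\delta))$ iff $S_-(y_0,\eta_0)\in\WF(e^{-i(\pi-\delta)H_0}u_0)$. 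The remaining task is therefore to understand how $\WF$ is transported by the \emph{short}-time evolution $e^{-i\delta H}$ near a nontrapping point, and to match the two classical maps.

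The key middle step is a short-time propagation-of-singularities statement for $e^{-i\delta H}$: if $(x_0,\xi_0)$ is backward nontrapping then, for $\delta$ small, the point $(y_0,\eta_0):=\exp(-\delta H_k)(x_0,\xi_0)$ is still backward nontrapping (the nontrapping set is flow-invariant and open), and
\[
(x_0,\xi_0)\in\WF\bigl(e^{-i\delta H}v\bigr)\quad\Longleftrightarrow\quad \exp(\delta H_k)(x_0,\xi_0)\in\WF(v)
\]
for any $v\in\mathcal H$ — this is the standard microlocal propagation for the Schrödinger operator $H$ along its Hamilton flow, valid because the harmonic part $\tfrac12|x|^2$ is of lower order than the principal symbol $k$ along nontrapping rays and contributes only a bounded conjugation (cf.\ the arguments behind Theorem 1.1, which must already contain a version of this). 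Actually, it is cleaner to organize the whole proof this way from the start: apply Theorem 1.1 directly at time $t_0$ close to $\pi$ and separately track the limit as $t_0\uparrow\pi$, so I would instead phase-decompose $u(\pi)=e^{-i\pi H_0}\,\bigl(e^{i\pi H_0}e^{-i\pi H}\bigr)u_0$ and use $e^{-i\pi H_0}f(x)=f(-x)$, i.e.\ $e^{-i\pi H_0}$ acts on $\WF$ by the antipodal map $\Gamma$. This immediately explains the appearance of $\Gamma$ in the statement: $(x_0,\xi_0)\in\WF(u(\pi))$ iff $\Gamma(x_0,\xi_0)\in\WF\bigl(e^{i\pi H_0}e^{-i\pi H}u_0\bigr)$.

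It then remains to identify the wave front set of $w:=e^{i\pi H_0}e^{-i\pi H}u_0$. Write $e^{-i\pi H}=e^{-it_0 H}e^{-i(\pi-t_0)H}$ and $e^{i\pi H_0}=e^{it_0H_0}e^{i(\pi-t_0)H_0}$ for a fixed $0<t_0<\pi$; applying Theorem 1.1(i) to $e^{-i(\pi-t_0)H}u_0$ at "observation time" $\pi-t_0$, and then Theorem 1.1(ii) to $e^{-it_0 H}(\,\cdot\,)$ run backward, one peels off the two $S_\mp$ maps. Concretely, for a point $(x_0,\xi_0)$ forward nontrapping, $\WF(e^{-it_0H}g)$ is governed by $S_+$ applied to $(x_0,\xi_0)$ and $\WF(e^{-it_0H_0}g)$; composing with the rotation identity for $e^{it_0H_0}$ and then with the backward-nontrapping analysis of $e^{-i(\pi-t_0)H}u_0$ produces exactly $S_-$ on the incoming side. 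Chasing the compositions, and using that the free rotations $e^{\pm it_0H_0}$ cancel the ones coming from the definitions of $S_\pm = \lim e^{-tH_{k_0}}\circ e^{tH_k}$, yields
\[
(x_0,\xi_0)\in\WF(u(\pi))\quad\Longleftrightarrow\quad S_+^{-1}\circ\Gamma\circ S_-(x_0,\xi_0)\in\WF(u_0),
\]
which is the claim. Here I use that $S_\pm$ are diffeomorphisms off the trapped sets $\mathcal T_\pm$ and that $\Gamma\circ S_-(x_0,\xi_0)$ lands in the domain of $S_+^{-1}$ (it is automatically away from $\mathcal T_+$ by the openness/invariance of trapping under $\Gamma$, since $\Gamma$ commutes with $H_k$).

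The main obstacle I anticipate is the bookkeeping of the classical maps at the endpoint time, i.e.\ making the limit $t_0\uparrow\pi$ in Theorem 1.1 rigorous: for $t_0<\pi$ the relevant classical correspondence is a genuine diffeomorphism near each nontrapping point, but as $t_0\to\pi$ the two scattering maps $S_\pm$ degenerate precisely on $\mathcal T_\mp$, and one has to verify that the singular locus of the composed map $S_+^{-1}\circ\Gamma\circ S_-$ is exactly where Theorem 1.2 is silent (namely the union of $\mathcal T_-$ and the $S_-^{-1}\circ\Gamma^{-1}$-preimage of $\mathcal T_+$), and that on its complement the propagation statements of Theorem 1.1 can be chained with uniform-in-$h$ estimates. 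The other technical point is to confirm that $e^{-i\pi H_0}$ really acts on wave front sets by $\Gamma$ with no extra twist — this is the special $t=\pi$ case of the rotation formula in Remark 1.1 (the rotation by angle $\pi$ in phase space is exactly $(x,\xi)\mapsto(-x,-\xi)$), so it is clean, but it must be stated carefully since it is the structural reason the antipodal map, rather than some other point transformation, appears.
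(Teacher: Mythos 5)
Your second plan --- pulling $e^{-i\pi H_0}$ out to explain $\Gamma$, then splitting the evolution at an interior time and applying Theorem~1.1 once forward and once backward --- is essentially the paper's argument, and the anticipated difficulty of taking $t_0\uparrow\pi$ never materializes in this approach since the intermediate time stays strictly inside $(0,\pi)$. The paper makes the bookkeeping you gesture at with ``chasing the compositions'' transparent by choosing the midpoint: set $g=u(\pi/2)$, apply Theorem~1.1(i) with $t_0=\pi/2$ and initial data $g$ to get $(x_0,\x_0)\in\WF(u(\pi))\Leftrightarrow(x_-,\x_-)\in\WF(e^{-i(\pi/2)H_0}g)=\WF(\hat g)$, then apply Theorem~1.1(ii) with $t_0=-\pi/2$ and initial data $g$ to get $(x',\x')\in\WF(u_0)\Leftrightarrow S_+(x',\x')\in\WF(e^{i(\pi/2)H_0}g)=\WF(\check g)=\Gamma(\WF(\hat g))$; equating the two via $(x',\x')=S_+^{-1}\circ\Gamma\circ S_-(x_0,\x_0)$ finishes. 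Your four-factor decomposition $e^{i\pi H_0}e^{-i\pi H}=e^{it_0H_0}e^{i(\pi-t_0)H_0}e^{-it_0H}e^{-i(\pi-t_0)H}$ does not pair up directly because the middle factors do not commute; what works (and is what the paper does at $s=\pi/2$, and what works for any $s\in(0,\pi)$) is to split the $H$-evolution at a single intermediate time $s$, use Theorem~1.1(i) for the forward piece $[s,\pi]$ and Theorem~1.1(ii) for the backward piece $[0,s]$, and produce $\Gamma$ from $e^{-i(\pi-s)H_0}=e^{-i\pi H_0}e^{isH_0}$, i.e.\ $\WF(e^{-i(\pi-s)H_0}v)=\Gamma\bigpare{\WF(e^{isH_0}v)}$. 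Your first plan (a separate short-time propagation lemma for $e^{-i\delta H}$) is an unnecessary detour and you were right to abandon it --- it would require a microlocal propagation result for $H$ along $\exp(tH_k)$ that is not in the paper and is not needed. The domain remark at the end is also simpler than you make it: $S_+^{-1}$ is globally defined on $\re^{2n}\setminus 0$ and $\Gamma\circ S_-$ lands there automatically, so no invariance-of-trapping argument is required.
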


The microlocal singularities of solutions to Schr\"odinger equations have been 
attracted attention during the past years, especially after the publication of the 
break-through paper by Craig-Kappeler-Strauss~\cite{CKS} in 1996
(for more literature, see references of \cite{CKS} and \cite{Na1}). 
On the other hand, 
the singularities of solutions to the harmonic oscillator type Schr\"odinger 
equations have been studied by several authors, including Zelditch~\cite{Ze}, 
Yajima~\cite{Yj}, Kapitanski-Rodnianski-Yajima~\cite{KRY}, Doi~\cite{Do1} 
and Wunsch~\cite{Wu1}. Most of these works concern the case with 
constant coefficients with potential perturbations. 
In particular, if the metric is flat, i.e., if $a_{jk}(x)=\d_{jk}$, then $S_\pm$ is the identity map, 
and Theorem~1.3 recovers results in \cite{Ze}, \cite{Yj}, \cite{KRY} and \cite{Do1}. 
In fact, if the metric is flat, Theorem~1.1 is also obtained by Doi~\cite{Do1}, and 
this paper is partially motivated by this beautiful work. 
Moreover, he also considered a class of long range type perturbations, 
i.e, when $V(x)=O(|x|)$ as $|x|\to\infty$, and demonstrated that a shift of 
singularities occurs. Microlocal smoothing effect for the Schr\"odinger equations on scattering 
manifolds with harmonic potential is studied by Wunsch~\cite{Wu1}.

In a sense, this paper is an analogue of a work by Nakamura~\cite{Na1} where 
the microlocal singularities of asymptotically flat Schr\"odinger equations is 
studied (see also a closely related work by Hassel-Wunsch~\cite{HW} and Ito-Nakamura \cite{ItNa}). 
The main difference (and the novelty) is the analysis of the classical trajectories 
with high energies. In \cite{Na1}, the standard classical scattering theory is  
sufficient to prove the propagation of singularities (with a scaling argument). 
However, in the presence of the harmonic potential, the high energy asymptotics 
of the classical trajectories is completely different from the long-time asymptotics. 
Thus we need to obtain precise high energy asymptotics of the trajectories using 
the time evolution of the harmonic oscillator, and it is carried out in Section~2. 
This situation is somewhat similar to the case of such analysis 
for Schr\"odinger equations with long-range perturbations \cite{Na2}, 
but the asymptotics itself is naturally completely different. 
We also note that our results have much in common with a paper by Zelditch~\cite{Ze},
at least in spirit, and our results may be considered as generalizations of 
his results to variable coefficients cases. 

The main results are proved in Section~3, and the argument is similar to 
\cite{Na1}. However, the scaling argument is slightly more complicated, and 
we try to give a more transparent formulation (see the last part of Section~2 
and the beginning of Section~3). In the last section, we discuss generalizations 
of our main theorems to the case when the harmonic potential is inhomogeneous. 

\medskip
Throughout this paper, we use the following notations: 
Our Hilbert space is $\mathcal{H}=L^2(\re^n)$, and the space of the bounded 
operators on $\mathcal{H}$ is denoted by $\mathcal{L}(\mathcal{H})$. 
The Fourier transform is denoted by 
\[
\hat u(\x) = \mathcal{F}u(\x)= (2\pi)^{-n/2}\int_{\re^n} e^{-ix\cdot\x} u(x)dx,
\]
and the inverse Fourier transform is denoted by $\check u(x)=\mathcal{F}^*u(x)$. 
For a smooth symbol $a(x,\x)$ on $\re^{2n}$, we denote the Weyl quantization by 
$a^w(x,D_x)$, i.e., for $u\in\mathcal{S}(\re^n)$, a Schwartz function on $\re^{n}$, 
\[
a^w(x,D_x)u(x) =(2\pi)^{-n} \iint e^{i(x-y)\cdot\x} a(\tfrac{x+y}{2},\x) u(y) dy\,d\x.
\]
We use the $S(m,g)$-symbol class notation of H\"ormander~\cite{Ho}.


\section{High energy asymptotics of the classical flow}

In this section, we study the high energy behavior of the classical flow generated by $p(x,\x)$. 
More precisely, we consider the properties of $\exp(tH_p)(x_0,\l\x_0)$ as $\l\to+\infty$. 
Throughout this section, we suppose $(x_0,\x_0)$ is forward nontrapping, and consider 
the case $t>0$. The case $t\leq 0$ can be considered similarly. 

For $\l>0$, we write
\begin{align*}
p^\l(x,\x) &= \frac12 \sum_{j,k=1}^n a_{jk}(x)\x_j\x_k +\frac{|x|^2}{2\l^2} +\frac{1}{\l^2}V(x), \\
p_0^\l(x,\x) &= \frac12|\x|^2+\frac{1}{2\l^2} |x|^2. 
\end{align*}
Then, by direct computations, we learn
\begin{align}
\pi_1(\exp(tH_p)(x,\l\x)) &= \pi_1 (\exp(\l t H_{p^\l})(x,\x)),  \\
\pi_2(\exp(t H_p)(x,\l\x)) &= \l \cdot \pi_2(\exp(\l t H_{p^\l})(x,\x)).
\end{align}
Hence, it suffices to consider $\exp(t H_{p^\l})(x,\x)$ for $0\leq t\leq \l t_0$, 
instead of $\exp(t H_p)(x,\l \x)$ for $0\leq t\leq t_0$. 

We note, for each fixed $t\in\re$, 
\begin{equation}
\lim_{\l\to\infty} \exp(tH_{p^\l})(x,\x) = \exp(tH_k)(x,\x)
\end{equation}
by the continuity of the solutions to ODEs with respect to the coefficients. 
Hence, if $t>0$ is large and then $\l>0$ is taken sufficiently large (after fixing $t$),    
$\pi_1(\exp(tH_{p^\l})(x_0,\x_0))$ is far away from the origin by virtue 
of the nontrapping condition. The next lemma claims that this statement holds 
for $0\leq t\leq \l\d$ with sufficiently small $\d>0$. 

\begin{lem}
There exists $\d>0$ and a small neighborhood $\Omega$ of $(x_0,\x_0)$ such that 
\[
|\pi_1(\exp(tH_{p^\l})(x,\x))|\geq c_1 t-c_2
\quad \text{for $0\leq t\leq \l\d$, $(x,\x)\in \Omega$}
\]
with some $c_1,c_2>0$.
\end{lem}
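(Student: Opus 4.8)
The plan is to exploit the limit relation $(2.4)$ together with a continuity-in-parameter estimate that is uniform on a growing time interval. Fix a time $T>0$, large enough (by the forward nontrapping hypothesis on $(x_0,\x_0)$ and by $(2.4)$) that $|\pi_1(\exp(TH_k)(x_0,\x_0))|\ge R$ for some large $R$; shrinking, we may assume the flow $\exp(tH_k)$ keeps $\pi_1$ bounded below on $[0,T]$ only in the weak sense we need. More usefully, choose $T$ so that along the $k$-flow the point has \emph{escaped}: $|\pi_1(\exp(tH_k)(x,\x))|$ is increasing and $\ge c_1 t - c_2/2$ for $t\in[T/2,T]$, $(x,\x)$ in a neighborhood $\Omega_0$ of $(x_0,\x_0)$ — this is the standard "once nontrapping, linear escape" fact recalled in Nakamura~\cite{Na1}, Section~2, applied to the metric Hamiltonian $k$, which is homogeneous of degree $2$ in $\x$ so that escaping trajectories are eventually nearly straight lines with speed comparable to $|\x|$.

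The core step is a Gronwall comparison between the $p^\l$-flow and the $k$-flow. Write $Z^\l(t)=\exp(tH_{p^\l})(x,\x)$ and $Z(t)=\exp(tH_k)(x,\x)$. The vector fields $H_{p^\l}$ and $H_k$ differ by the Hamiltonian vector field of $\frac{|x|^2}{2\l^2}+\frac{1}{\l^2}V(x)$, whose size is $O(\l^{-2}\jap{x})$ in the $\x$-component and $0$ in the $x$-component; also $H_k$ itself has $x$-derivatives of size $O(1)$ (here the short-range decay of $a_{jk}-\d_{jk}$ and boundedness of its derivatives matter) while its $\x$-component grows only linearly in $|\x|$. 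The issue is that $|\x|$ along the flow can itself grow, so one sets up a simultaneous bound on $|Z^\l(t)-Z(t)|$ and on $|\pi_2(Z^\l(t))|$. A Gronwall argument then gives $|Z^\l(t)-Z(t)|\le C\l^{-2}e^{Ct}$ as long as both stay in a region where the coefficients are controlled; choosing $\l\d$ with $\d$ small forces $C\l^{-2}e^{C\l\d}=C\l^{-2+C\d/\log\l}$, which is not quite uniform, so instead one iterates on unit time-steps: on each interval $[m,m+1]$ the two flows differ by $O(\l^{-2})$ times a factor depending only on the local size of the solution, and one shows inductively that for $t\le \l\d$ the $p^\l$-trajectory stays within $O(1)$ of the $k$-trajectory, hence (by the escape property of the latter, rescaled) satisfies $|\pi_1(Z^\l(t))|\ge c_1 t - c_2$.

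Concretely, I would first establish the escape estimate for the $k$-flow on $\Omega_0$: there are $T_0,c_1,c_2$ with $|\pi_1(\exp(tH_k)(x,\x))|\ge c_1 t-c_2$ for all $t\ge 0$, $(x,\x)\in\Omega_0$ — and crucially the trajectory spends only a bounded time in any fixed ball, with $\x$-component bounded above and below. Then, second, I would prove the uniform comparison: there is $\d>0$ and a neighborhood $\Omega\subset\Omega_0$ such that $\sup_{0\le t\le\l\d}|Z^\l(t)-Z(t)|\le 1$ for all large $\l$ — this is the technical heart, done by a continuous-induction (bootstrap) argument on the interval $[0,\l\d]$, closing the estimate because each infinitesimal contribution is weighted by $\l^{-2}$ and the total elapsed "error budget" is $\le \l\d\cdot C\l^{-2}=C\d/\l\to 0$, provided the $\x$-component stays $O(\l^{0})$, which is itself part of the bootstrap (the $x/\l^2$ forcing term contributes $O(\l\d\cdot \l^{-2}\cdot(\text{size of }x))=O(\d\cdot(\text{size of }x)/\l)$ and $|x|\lesssim t\lesssim \l\d$ along the escaping trajectory, so this is $O(\d^2)$, absorbable). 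Combining the two gives $|\pi_1(Z^\l(t))|\ge |\pi_1(Z(t))|-1\ge c_1 t-(c_2+1)$ on $[0,\l\d]$, which is the claim after relabeling the constants.

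I expect the main obstacle to be precisely the uniformity over the time interval $[0,\l\d]$ whose length tends to infinity: a naive Gronwall inequality loses control because of the exponential factor $e^{Ct}$. The resolution is that the relevant growth is not exponential — along an \emph{escaping} trajectory the dynamics is, to leading order, free motion, the linearized flow grows only polynomially, and the perturbing term carries the gain $\l^{-2}$ — so one must set up the bootstrap using the escape/nontrapping structure rather than a crude ODE stability bound. Keeping track simultaneously of the position difference and the momentum bound, and verifying that $|x|\lesssim t$ along the flow (so that the $\l^{-2}V$ and $\l^{-2}|x|^2$ terms are genuinely small on the whole interval), is the part that needs care.
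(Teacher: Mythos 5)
Your overall strategy --- compare $Z^\l(t)=\exp(tH_{p^\l})(x,\x)$ with $Z(t)=\exp(tH_k)(x,\x)$ and carry the escape estimate from the $k$-flow over to the $p^\l$-flow by a bootstrap --- is a reasonable alternative to what the paper does, but as written it contains a genuine gap: the uniform bound $\sup_{0\le t\le\l\d}|Z^\l(t)-Z(t)|\le 1$ that you base the conclusion on is false. Your own computation of the ``error budget'' only controls the \emph{momentum} deviation: the forcing term $\l^{-2}x$ with $|x|\lesssim t$ indeed contributes $\int_0^{\l\d}\l^{-2}t\,dt=O(\d^2)$ to $|\pi_2(Z^\l)-\pi_2(Z)|$. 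But this $O(\d^2)$ momentum discrepancy then feeds into $\dot x=\sum_j a_{jk}(x)\x_j$ and produces a \emph{position} deviation $\int_0^t O(\d^2)\,ds=O(\d^2 t)$, which at $t=\l\d$ is $O(\l\d^3)$ and blows up with $\l$. (You can see this already in the unperturbed case: $\pi_1\exp(tH_{p_0^\l})=\cos(t/\l)x+\l\sin(t/\l)\x$ versus $\pi_1\exp(tH_{k_0})=x+t\x$ differ by $\l(\sin\s-\s)\x\approx-\l\s^3\x/6$ at $t=\l\s$.) So the concluding line $|\pi_1(Z^\l(t))|\ge|\pi_1(Z(t))|-1$ is not justified.

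The salvage is available but needs to be made explicit: aim for the relative bound $|\pi_2(Z^\l(t))-\pi_2(Z(t))|\le C\d^2$ and $|\pi_1(Z^\l(t))-\pi_1(Z(t))|\le C\d^2\jap{t}$ rather than an absolute $O(1)$ bound. Running the Gronwall/bootstrap with these ans\"atze closes (the $a_{jk}$-variation contributes only $\jap{t}^{-\m-1}\times(\text{errors})$, integrable since $\m>1$; the $\l^{-2}x$ and $\l^{-2}\nabla V$ terms give $O(\d^2)$ as you computed), and then $|\pi_1(Z^\l(t))|\ge(c_1-C\d^2)t-c_2$, which is what you want after shrinking $\d$. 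This is the estimate you should state instead of the $O(1)$ bound.

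For comparison, the paper sidesteps the two-flow comparison entirely. It works directly with $r(t)=|y^\l(t)|^2$ and shows, using energy conservation, the short-range decay of $a_{jk}-\d_{jk}$ and $V$, and the smallness of $\l^{-2}\jap{y^\l}^2=O(\jap{t/\l}^2)$ for $t\le\l\d$, that $\ddot r(t)\ge 3p^\l(x,\x)-c_4\jap{y^\l(t)}^{-\m}$. By nontrapping (and $\exp(tH_{p^\l})\to\exp(tH_k)$ as $\l\to\infty$ at fixed $t$) one chooses $T_0$ so that for large $\l$ the trajectory at time $T_0$ is already far out with $\dot r(T_0)>0$, and a standard convexity argument gives $r(t)\ge r(T_0)+p^\l(x,\x)(t-T_0)^2$ on $[T_0,\l\d]$. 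That virial-type argument is shorter and avoids the delicate feedback between momentum and position errors that makes your comparison argument touchy.
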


\begin{proof}
In the following, we denote
\[
\exp(tH_{p^\l})(x,\x) = (y^\l(t;x,\x), \y^\l(t;x,\x)).
\]
By the conservation of the energy: $p^\l(y^\l(t),\y^\l(t))=\text{const.}$, and 
the ellipticity of the principal symbol, we easily see
\[
\frac{1}{\l^2}|y^\l(t;x,\x)|^2 +|\y^\l(t;x,\x)|^2 \leq C, \quad (x,\x)\in\Omega, t\in\re,
\]
where $\Omega$ is a small neighborhood of $(x_0,\x_0)$. 
Hence, in particular, we have
\[
|y^\l(t;x,\x)|\leq C\jap{t}, \quad |\y^\l(t;x,\x)|\leq C
\]
for $t>0$ with some $C>0$. On the other hand, by direct computations, we have 
\begin{align*}
\frac{d^2}{dt^2}|y^\l(t)|^2 &= 2\frac{d^2}{dt^2}\biggpare{y^\l\cdot\frac{d y^\l}{dt}}
=2\frac{d}{dt}\biggpare{\sum_{j,k} a_{jk}(y^\l)y_j^\l \y_k^\l} \\
&= 4p^\l(y^\l,\y^\l)+2 W(y^\l,\y^\l), 
\end{align*}
where 
\begin{align*}
W(y^\l,\y^\l) &= \sum_{j,k,\ell} a_{jk}(y^\l)(a_{k\ell}(y^\l)-\d_{k\ell})\y^\l_j\y^\l_k \\
&\quad+ \sum_{j,k,\ell,m} \frac{\pa a_{jk}}{\pa x_\ell}(y^\l) \, a_{\ell m}(y^\l) 
\y^\l_my^\l_j\y^\l_k \\
&\quad - \sum_{j,k,\ell,m} a_{jk}(y^\l) \frac{\pa a_{\ell m}}{\pa x_k}(y^\l)
 y^\l_j \y^\l_\ell \y^\l_m 
-\frac{1}{\l^2}\sum_{j,k}a_{jk}(y^\l) y^\l_j y^\l_k \\
&\quad -\frac{1}{\l^2} \sum_{j,k} a_{jk}(y^\l) \frac{\pa V}{\pa x_k}(y^\l) y^\l_j 
-\frac{1}{\l^2}|y^\l|^2 - \frac{2}{\l^2} V(y^\l).
\end{align*}
Combining these, we learn 
\[
\frac{d^2}{dt^2} |y^\l(t)|^2\geq 4p^\l(y^\l,\y^\l)-c_4 (\jap{y^\l}^{-\m}
+\l^{-2}\jap{y^\l}^2).
\]
We note $p^\l(x_0,\x_0)=k(x_0,\x_0)+O(\l^{-2})$ and $k(x_0,\x_0)>0$, 
and hence $p^\l(x_0,\x_0)>0$ for large $\l$. 
Since $\l^{-2}\jap{y^\l}^2 =O(\jap{t/\l}^2)$, if $0\leq t\le \d \l$ with sufficiently small
$\d>0$, the last term of the right hand side of the above inequality is small and 
\[
\frac{d^2}{dt^2} |y^\l(t)|^2\geq 3p^\l(y^\l,\y^\l)-c_4 \jap{y^\l}^{-\m}
\]
for the initial condition  $(x,\x)\in \Omega$. 
By the nontrapping condition and (2.3), if $T_0>0$ is sufficiently large and $\l$ is large 
(depending on  $T_0$), then 
\[
c_4 \jap{y^\l(T_0)}^{-\m} \leq p^\l(x,\x) \quad \text{for $(x,\x)\in\Omega$, and}
\quad \frac{d}{dt}|y^\l(T_0)|>0. 
\]
Then by the standard convexity argument, we learn 
\[
|y^\l(t)|^2 \geq |y^\l(T_0)|^2 + p^\l(x,\x)(t-T_0)^2
\quad \text{for $t\in [T_0,\d \l]$}, 
\]
and this implies the assertion. 
\end{proof}

\begin{lem}
Let $\d>0$ and $\Omega$ as in the previous lemma, and let $\s\in (0,\d)$. 
Then 
\[
\lim_{\l\to\infty} \exp(-\s\l H_{p^\l_0})\circ \exp(\s\l H_{p^\l})(x,\x)= S_+(x,\x), 
\]
for $(x,\x)\in\Omega$.
\end{lem}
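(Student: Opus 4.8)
The plan is to compare the perturbed flow $\exp(tH_{p^\l})$ with the free harmonic flow $\exp(tH_{p_0^\l})$ directly on the time interval $[0,\s\l]$, using Lemma~2.1 to control the size of the trajectory. First I would fix $\s\in(0,\d)$ and the neighborhood $\Omega$ from Lemma~2.1, and set $W^\l(t;x,\x) = \exp(-tH_{p_0^\l})\circ\exp(tH_{p^\l})(x,\x)$. Since $p^\l - p_0^\l = k - k_0 + \l^{-2}V$ and $\exp(-tH_{p_0^\l})$ is (up to rescaling in the $x$-variable by $\l^{-1}$, cf. the relations (2.1)--(2.2)) essentially the rotation generated by $H_0$, the time derivative of $W^\l$ can be written as $(\exp(-tH_{p_0^\l}))_* H_{(p^\l-p_0^\l)}$ evaluated along the trajectory. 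The point is that on the support of the perturbation the quantity $a_{jk}(y^\l(t)) - \d_{jk}$ and $\l^{-2}\pa V$ decay like $\jap{y^\l(t)}^{-\m}$ with $\m>1$, and by Lemma~2.1 we have $|y^\l(t)| \geq c_1 t - c_2$ for $0\le t\le \s\l$, so $\jap{y^\l(t)}^{-\m} \le C\jap{t}^{-\m}$, which is integrable in $t$ uniformly in $\l$.

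Next I would estimate $\tfrac{d}{dt}W^\l(t;x,\x)$. Because $\exp(-tH_{p_0^\l})$ acts linearly on phase space (it is the Hamilton flow of a quadratic form) with operator norm bounded uniformly in $t$ and $\l$ (this follows from the energy estimate $\l^{-2}|y^\l|^2 + |\y^\l|^2 \le C$ already in the proof of Lemma~2.1, together with the explicit $\cos/\sin$ form), the derivative is bounded by $C(\jap{y^\l(t)}^{-\m} + \l^{-2}\jap{y^\l(t)}^2 \cdot \l^{-1})$ — more precisely one keeps track of the scaling and finds the terms coming from $k - k_0$ give $\jap{y^\l}^{-\m}$, while the terms coming from $\l^{-2}V$ give $\l^{-2}\jap{y^\l}^{1-\m}$ and $\l^{-2}\jap{y^\l}^{2-\m}$; using $|y^\l(t)|\le C\jap{t}$ and $t\le \s\l$, both are dominated by $C\jap{t}^{-\m}$. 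Hence $\int_0^{\s\l}\bigabs{\tfrac{d}{dt}W^\l(t;x,\x)}\,dt$ is bounded uniformly in $\l$ and, more importantly, the tails $\int_{T}^{\s\l}$ are small uniformly in $\l$ for $T$ large. This gives equicontinuity of $t\mapsto W^\l(t)$ and a uniform Cauchy property of $W^\l(\s\l;\cdot)$ in $\l$.

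Finally I would identify the limit. For any fixed $T>0$, on $[0,T]$ we have $\exp(tH_{p^\l})\to\exp(tH_k)$ and $\exp(-tH_{p_0^\l})\to\exp(-tH_{k_0})$ by (2.3) and the analogous statement for $p_0^\l$, so $W^\l(T;x,\x)\to \exp(-TH_{k_0})\circ\exp(TH_k)(x,\x)$. Combining this with the uniform smallness of the tail $W^\l(\s\l) - W^\l(T)$ for large $T$ (uniform in $\l$) and the corresponding smallness of $\exp(-TH_{k_0})\circ\exp(TH_k)(x,\x) - S_+(x,\x)$ for large $T$ (by the definition of $S_+$ as the $t\to+\infty$ limit), a standard $3\e$-argument yields $\lim_{\l\to\infty} W^\l(\s\l;x,\x) = S_+(x,\x)$. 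The main obstacle is the bookkeeping in the second step: one must carefully track how the rescaling by $\l$ distributes among the various terms in $\tfrac{d}{dt}W^\l$ so that every perturbative contribution is genuinely bounded by an integrable, $\l$-independent function of $t$ on the whole interval $[0,\s\l]$ — this is exactly where the short-range condition $\m>1$ and the linear lower bound from Lemma~2.1 are both essential, and getting the powers of $\jap{y^\l}$ right (rather than off by a factor that would destroy integrability or the $\l$-uniformity) is the delicate point.
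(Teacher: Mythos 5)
Your overall strategy — differentiate the interaction flow $W^\l(t)=\exp(-tH_{p_0^\l})\circ\exp(tH_{p^\l})$, obtain an $\l$-uniform integrable bound of the form $O(\jap{t}^{-\m})$ via the linear lower bound from Lemma~2.1, converge pointwise in $t$ to the free kinetic flow, and conclude by dominated convergence (your $3\e$-argument is equivalent) — is exactly the route the paper takes in the proof of Lemma~2.2, where these two pieces are (2.4)--(2.5) together with the fixed-$t$ limit. So the skeleton is right. However, the step you flag as ``the delicate point'' contains a real imprecision that, taken at face value, would break the estimate: the linear map $\exp(-tH_{p_0^\l})$ does \emph{not} have operator norm bounded uniformly in $t$ and $\l$, since its $(1,2)$-block is $-\l\sin(t/\l)$, which is of size $\l$ when $t\sim\s\l$. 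The resolution is not merely a rescaling: one must use the pointwise bound $\l\abs{\sin(t/\l)}\leq t$ together with the extra power of decay from $\pa_x(k-k_0)=O(\jap{y^\l}^{-\m-1})$ and the lower bound $\jap{y^\l}\geq c\jap{t}$ to absorb this factor, giving $t\cdot\jap{t}^{-\m-1}=O(\jap{t}^{-\m})$. Similarly the $V$-term $\l^{-2}\pa_x V$ picks up only one net power of $\l^{-1}$ after the $\l\sin(t/\l)$ factor, and closes using $\jap{y^\l}\leq C\jap{t}\leq C\l$. Your displayed bound ``$C(\jap{y^\l}^{-\m}+\l^{-2}\jap{y^\l}^2\cdot\l^{-1})$'' does not reflect this structure and would not, as written, yield an $\l$-uniform $\jap{t}^{-\m}$ bound. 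The paper's explicit term-by-term computation of $\tfrac{d}{dt}z^\l_k$ and $\tfrac{d}{dt}\z^\l_k$ is precisely what makes visible the exact cancellation of the ``free'' terms and the $\sin(t/\l)$-tempered size of what remains; if you redo your second step at this level of detail the argument closes, and then your $3\e$-conclusion matches the paper's dominated-convergence step.
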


\begin{proof}
We denote
\[
(z^\l(t;x,\x),\z^\l(t;x,\x))= \exp(-t H_{p_0^\l})\circ \exp(tH_{p^\l})(x,\x),
\]
and we show the convergence of $(z^\l(\s\l),\z^\l(\s\l))$ to $S_+(x,\x)$
for $(x,\x)\in\Omega$. We recall 
\[
\exp(-tH_{p_0^\l})(x,\x) = \bigpare{\cos(\tfrac{t}{\l})x-\l\sin(\tfrac{t}{\l})\x, 
\tfrac{1}{\l}\sin(\tfrac{t}{\l})x +\cos(\tfrac{t}{\l})\x}
\]
since $p_0^\l$ is the scaled harmonic oscillator. Thus 
\[
z^\l(t) =\cos(\tfrac{t}{\l})y^\l(t)-\l\sin(\tfrac{t}{\l})\y^\l(t), \quad
\z^\l = \tfrac{1}{\l}\sin(\tfrac{t}{\l})y^\l (t)+\cos(\tfrac{t}{\l})\y^\l(t).
\]
By direct computations, we have 
\begin{align}
\frac{d}{dt} z^\l_k &= -\frac{1}{\l} \sin(\tfrac{t}{\l}) y^\l_k 
+ \cos(\tfrac{t}{\l})\frac{d y^\l_k}{dt} -\cos(\tfrac{t}{\l})\y^\l_k 
-\l \sin(\tfrac{t}{\l})\frac{d\y^\l_k}{dt}  \\
&=-\frac{1}{\l} \sin(\tfrac{t}{\l})y^\l_k +\cos(\tfrac{t}{\l})\sum_j a_{jk}(y^\l) \y^\l_j 
-\cos(\tfrac{t}{\l}) \y^\l_k +\frac{1}{\l} \sin(\tfrac{t}{\l})y^\l_k \nonumber \\
&\quad + \sin(\tfrac{t}{\l})\biggpare{\frac{\l}{2}\sum_{i,j} \frac{\pa a_{ij}}{\pa x_k}
(y^\l)\y^\l_i\y^\l_j +\frac{1}{\l} \frac{\pa V}{\pa x_k}(y^\l)} \nonumber \\
&= \cos(\tfrac{t}{\l})\sum_j (a_{jk}(y^\l)-\d_{jk})\y^\l_j \nonumber \\
&\quad +\sin(\tfrac{t}{\l}) \biggpare{\frac{\l}{2}\sum_{i,j} \frac{\pa a_{ij}}{\pa x_k}(y^\l)
\y^\l_i\y^\l_j +\frac{1}{\l} \frac{\pa V}{\pa x_k}(y^\l)} \nonumber \\
&= O(\jap{y^\l}^{-\m}) + O(\l \jap{y^\l}^{-\m-1}+\l^{-1}\jap{y^\l}^{1-\m}) \nonumber \\
&= O(\jap{t}^{-\m})\nonumber 
\end{align}
for $0\leq t\leq \d\l$. Similarly, we have 
\begin{align}
\frac{d}{dt} \z^\l_k
&= \frac{1}{\l^2} \cos(\tfrac{t}{\l}) y^\l_k 
+\frac{1}{\l} \sin(\tfrac{t}{\l})\frac{d y^\l_k}{ dt}
-\frac{1}{\l}\sin(\tfrac{t}{\l})\y^\l_k +\cos(\tfrac{t}{\l})\frac{d\y^\l_k}{dt} \\
&= \frac{1}{\l^2}\cos(\tfrac{t}{\l}) y^\l_k +\frac{1}{\l} \sum_j a_{jk}(y^\l) \y^\l_j 
-\frac{1}{\l} \sin(\tfrac{t}{\l})\y^\l_k -\frac{1}{\l^2} \cos(\tfrac{t}{\l}) y^\l_k \nonumber \\
&\quad - \cos(\tfrac{t}{\l})\biggpare{\frac12 \sum_{i,j} \frac{\pa a_{ij}}{\pa x_k}(y^\l)
\y^\l_i\y^\l_j +\frac{1}{\l^2}\frac{\pa V}{\pa x_k}(y^\l) }\nonumber \\
&= \frac{1}{\l} \sin(\tfrac{t}{\l})\sum_j (a_{jk}(y^\l)-\d_{jk}) \y^\l_j \nonumber \\
&\quad -\cos(\tfrac{t}{\l}) \biggpare{\frac12 \sum_{i,j} \frac{\pa a_{ij}}{\pa x_k}(y^\l)
\y^\l_i\y^\l_j +\frac{1}{\l^2}\frac{\pa V}{\pa x_k}(y^\l) } \nonumber \\
&= O(\l^{-1}\jap{y^\l}^{-\m})+O(\jap{y^\l}^{-\m-1} +\l^{-2}\jap{y^\l}^{-(\m-1)} )\nonumber \\
&= O(\jap{t}^{-\m-1})\nonumber 
\end{align}
for $0\leq t\leq \d\l$. Moreover, for each $t\in\re$, we have 
\begin{align*}
\lim_{\l\to\infty}\frac{d}{dt} z^\l_k(t)  
&= \sum_j a_{jk}(\tilde y) \tilde \y_j -\tilde\y_k + \frac{t}{2}\sum_{i,j} 
\frac{\pa a_{ij}}{\pa x_k}(\tilde y)\tilde y_j\tilde\y_j \\
&= \frac{d}{dt}(\tilde y_k -t\tilde \y_k), \\
\lim_{\l\to\infty} \frac{d}{dt}\z^\l_k(t) &= 
-\frac{1}{2} \sum_{i,j} \frac{\pa a_{ij}}{\pa x_k}(\tilde y)\tilde \y_i\tilde\y_j 
= \frac{d}{dt}\tilde \y_k,
\end{align*}
where $(\tilde y(t),\tilde\y(t))= \exp(tH_k)(x,\x)$. By using the dominated 
convergence theorem, we conclude 
\begin{align*}
\lim_{\l\to\infty} z^\l(\s\l) &= x+ \lim_{\l\to\infty} \int_0^{\s\l}
\frac{d z^\l}{dt}dt = x+ \int_0^\infty \frac{d}{dt}(\tilde y-t\tilde\y)dt \\
&= \lim_{t\to+\infty} (\tilde y(t)-t\tilde \y(t)) =\pi_1(S_+(x,\x)), \\
\lim_{\l\to\infty} \z^\l(\s\l) &= \x +\lim_{\l\to\infty} \int_0^{\s\l}
\frac{d\z^\l}{dt} dt  
= \x+\int_0^\infty \frac{d}{dt}\tilde \y(t) dt \\
&= \lim_{\t\to+\infty} \tilde\y(t) = \pi_2(S_+(x,\x)).
\end{align*}
This completes the proof of the lemma. 
\end{proof}

\begin{lem}
Let $0<\s<\pi$, and let $\Omega$ be a small neighborhood of $(x_0,\x_0)$ as in 
the previous lemmas. Then 
\[
\lim_{\l\to\infty} \exp(-\s\l H_{p_0^\l})\circ \exp(\s\l H_{p^\l})(x,\x) =S_+(x,\x)
\]
for $(x,\x)\in \Omega$. 
\end{lem}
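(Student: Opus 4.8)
\section*{Proof proposal}

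The plan is to reduce the lemma to a single lower bound on $y^\l(t):=\pi_1\bigpare{\exp(tH_{p^\l})(x,\x)}$ and then re-use the differential identities from the proof of Lemma~2.2. Fix $(x,\x)\in\Omega$; it is forward nontrapping with $\x\neq0$, so $p_+:=\pi_2(S_+(x,\x))$ satisfies $\abs{p_+}^2=2k(x,\x)>0$, and we put $q_+:=\pi_1(S_+(x,\x))$. If $\s<\d$ the conclusion is Lemma~2.2, so assume $\d\le\s<\pi$, set $\d':=\d/2$ (with $\d,c_1,c_2$ as in Lemma~2.1) and $s_0:=\min(\sin\d',\sin\s)>0$. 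Writing, as in the proof of Lemma~2.2, $(z^\l(t),\z^\l(t))=\exp(-tH_{p_0^\l})\circ\exp(tH_{p^\l})(x,\x)$, the relations recorded there invert to
\[
y^\l(t)=\cos(\tfrac t\l)\,z^\l(t)+\l\sin(\tfrac t\l)\,\z^\l(t),
\]
and the (exact) formulas for $\tfrac{d}{dt}z^\l$, $\tfrac{d}{dt}\z^\l$ in that proof, together with Assumption~A and the time-uniform energy bound $\abs{\pi_2(\exp(tH_{p^\l})(x,\x))}\le C$ from the proof of Lemma~2.1, give for all $t\ge0$
\[
\Bigabs{\tfrac{d}{dt}z^\l(t)}=O\bigpare{\jap{y^\l(t)}^{-\m}}+\bigabs{\sin\tfrac t\l}\,O\bigpare{\l\jap{y^\l(t)}^{-\m-1}+\l^{-1}\jap{y^\l(t)}^{1-\m}},
\]
\[
\Bigabs{\tfrac{d}{dt}\z^\l(t)}=O\bigpare{\l^{-1}\jap{y^\l(t)}^{-\m}+\jap{y^\l(t)}^{-\m-1}+\l^{-2}\jap{y^\l(t)}^{1-\m}}.
\]
The key claim is that for all large $\l$, $\abs{y^\l(t)}\ge c\l$ for every $t\in[\d'\l,\s\l]$, where $c>0$ is any fixed constant $<\min(c_1\d',\tfrac12 s_0\abs{p_+})$. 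Granting it, on $[\d'\l,\s\l]$ we have $\jap{y^\l}\ge c\l$, so (using $\m>1$) the two estimates above are $O(\l^{-\m})$ and $O(\l^{-1-\m})$; integrating over an interval of length $\le\pi\l$ yields $\abs{z^\l(\s\l)-z^\l(\d'\l)}=O(\l^{1-\m})\to0$ and $\abs{\z^\l(\s\l)-\z^\l(\d'\l)}=O(\l^{-\m})\to0$. Since $\d'<\d$, Lemma~2.2 gives $(z^\l(\d'\l),\z^\l(\d'\l))\to(q_+,p_+)=S_+(x,\x)$, hence $\exp(-\s\l H_{p_0^\l})\circ\exp(\s\l H_{p^\l})(x,\x)=(z^\l(\s\l),\z^\l(\s\l))\to S_+(x,\x)$, which is the assertion.

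For the claim I would argue by continuity in $t$. By Lemma~2.1, $\abs{y^\l(\d'\l)}\ge c_1\d'\l-c_2>c\l$ for $\l$ large, so $\{\t\in[\d',\s]:\abs{y^\l(t)}\ge c\l\text{ on }[\d'\l,\t\l]\}$ is a nonempty closed interval $[\d',T^\l]$. Assume $T^\l<\s$, so $\abs{y^\l(T^\l\l)}=c\l$. On $[\d'\l,T^\l\l]$ one has $\jap{y^\l}\ge c\l$, whence $\tfrac{d}{dt}z^\l=O(\l^{-\m})$ and $\tfrac{d}{dt}\z^\l=O(\l^{-1-\m})$ with implied constants independent of $T^\l$; integrating from $\d'\l$ (interval length $\le\pi\l$) and invoking Lemma~2.2 at $\d'\l$, one gets, for $\l$ large and all $t\in[\d'\l,T^\l\l]$,
\[
\abs{z^\l(t)}\le\abs{q_+}+1,\qquad\abs{\z^\l(t)}\ge\tfrac34\abs{p_+}.
\]
Putting $t=T^\l\l$ into the rotation identity and using $\abs{\sin T^\l}\ge s_0$ (valid since $T^\l\in[\d',\s]\subset(0,\pi)$) gives
\[
\abs{y^\l(T^\l\l)}\ge\l\,\abs{\sin T^\l}\,\abs{\z^\l(T^\l\l)}-\abs{z^\l(T^\l\l)}\ge\tfrac34 s_0\abs{p_+}\l-\abs{q_+}-1>c\l
\]
for $\l$ large, contradicting $\abs{y^\l(T^\l\l)}=c\l$; hence $T^\l=\s$.

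The step I expect to be the real obstacle is exactly this claim: that $\abs{y^\l(t)}$ does not drop back to $O(1)$ before $t=\pi\l$. The convexity argument of Lemma~2.1 is useless here, because once $\abs{y^\l}\sim\l$ the term $\l^{-2}\abs{y^\l}^2$ in $\tfrac{d^2}{dt^2}\abs{y^\l}^2$ is comparable to the energy, the trajectory is turned by the harmonic potential, and $\abs{y^\l(t)}$ genuinely does return near $0$ around $t=\pi\l$; this is precisely why one must take $\s<\pi$, and why the antipodal map $\Gamma$ appears at $t=\pi$ in Theorem~1.3. The substitute, used above, is that in the rotated coordinates the ``momentum'' $\z^\l$ is \emph{almost conserved} on the region where $\abs{y^\l}$ is large, while it starts at $t=\d'\l$ near the nonvanishing vector $p_+$; the continuity argument then plays ``$\z^\l$ stays near $p_+$'' and ``$\abs{y^\l}=\bigabs{\cos(\tfrac t\l)z^\l+\l\sin(\tfrac t\l)\z^\l}\ge c\l$'' off against one another, and this closes precisely so long as $\sin(\tfrac t\l)$ stays bounded away from $0$, i.e.\ for $\tfrac t\l$ in a compact subset of $(0,\pi)$.
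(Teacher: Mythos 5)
Your proof is correct and takes essentially the same route as the paper's: both are continuity (bootstrap) arguments that use the exact formulas for $\tfrac{d}{dt}z^\l$ and $\tfrac{d}{dt}\z^\l$, a lower bound $|y^\l(t)|\gtrsim\l$ on $[\,\cdot\,\l,\s\l]$ to turn them into $O(\l^{-\m})$ and $O(\l^{-\m-1})$, and Lemma~2.2 at a time just inside the $\d$-window to anchor the integration. The only organisational difference is the bootstrapped quantity: you track $|y^\l(t)|\ge c\l$ directly and deduce from it that $(z^\l,\z^\l)$ stays near $S_+(x,\x)$, while the paper tracks $\max(|z^\l-x_+|,|\z^\l-\x_+|)\le\e$ and derives $|y^\l|\ge\d_1\l$ as a consequence; both conversions go through the rotation identity and the fact that $\sin(t/\l)$ is bounded away from zero for $t/\l$ in a compact subset of $(0,\pi)$.
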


\begin{proof}
It suffices to consider the case $\d<\s<\pi$, and we fix such $\s$. 
Let $\e>0$, and we show that if 
\[
\max(|z^\l(\s\l)-x_+|, |\z^\l(\s\l)-\x_+|) >\e
\]
then $\l$ is bounded from above, where $(x_+,\x_+)=S_+(x,\x)$, 
and $(z^\l,\z^\l)$ is as in the proof of the previous lemma. 
Our claim then follows from this assertion by contradiction. 

We first note
\[
y^\l(t) =\cos(\tfrac{t}{\l}) z^\l(t) +\l \sin(\tfrac{t}{\l}) \z^\l(t).
\]
For the moment, we suppose $|z^\l(t)-x_+|$, $|\z^\l(t)-\x_+|\leq\e$. Then we have 
\[
|y^\l(t)| \geq \l \sin(\tfrac{t}{\l}) (|\x_+|-\e)-(|x_+|+\e)\geq \d_1\l
\]
with some $\d_1>0$ provided $\d\l\leq t\leq \s\l$, $\e<|\x_+|/2$ and 
$\l\geq \l_0$, where $\d_1$ and $\l_0$ depend only on $|x_+|$ and $|\x_+|$. 
Then, by using formulas (2.4) and (2.5), we learn 
\[
\biggabs{\frac{d}{dt}z^\l(t)}\leq C\l^{-\m}, \quad 
\biggabs{\frac{d}{dt}\z^\l(t)} \leq C \l^{-\m-1}.
\]

Now we choose $\l$ sufficiently large that 
\[
\max(|z^\l(\d\l)-x_+|, |\z^\l(\d\l)-\x_+|)\leq \frac{\e}{2}, 
\]
and suppose 
\[
\max(|z^\l(\s\l)-x_+|, |\z^\l(\s\l)-\x_+|)\geq \e. 
\]
Then there exists $t_0\in (\d\l, \s\l)$ such that 
\[
\max(|z^\l(t_0)-x_+|, |\z^\l(t_0)-\x_+|)=\e
\]
and 
\[
\max(|z^\l(t)-x_+|, |\z^\l(t)-\x_+|)\leq \e\quad \text{for $\d\l\leq t\leq t_0.$}.
\]
By the above observation, we learn 
\begin{align*}
|z^\l(t_0)-x_+| &= \biggabs{z^\l(\d\l)-x_+ +\int_{\d\l}^{t_0} \frac{d z^\l}{dt}(t) dt } \\
&\leq |z^\l(\d\l)-x_+| +C (t_0-\d\l)\l^{-\m} \\
&\leq |z^\l(\d\l)-x_+| +C (\s-\d)\l^{-(\m-1)}, \\
|\z^\l(t_0)-\x_+| &= \biggabs{\z^\l(\d\l)-\x_+ +\int_{\d\l}^{t_0} \frac{d \z^\l}{dt}(t) dt } \\
&\leq |\z^\l(\d\l)-\x_+| +C (\s-\d)\l^{-\m}.
\end{align*}
Thus we have 
\[
\e=\max(|z^\l(t_0)-x_+|, |\z^\l(t_0)-\x_+|) \leq \frac{\e}{2} +C(\s-\d)\l^{-(\m-1)},
\]
and hence $\l \leq (2C(\s-\d)/\e)^{1/(\m-1)}$. This proves the assertion. 
\end{proof}

The next theorem follows immediately from Lemma~2.3. 

\begin{thm}
(i) Suppose $(x_0,\x_0)$ is forward nontrapping, and let $0<\s<\pi$. Then there exists
a neighborhood $\Omega$ of $(x_0,\x_0)$ such that 
\begin{align*}
&\lim_{\l\to\infty} \pi_1\bigpare{\exp(-\s H_{p_0})\circ \exp(\s H_p)(x,\l\x)} 
=\pi_1 (S_+(x,\x)), \\
& \lim_{\l\to\infty} \l^{-1}  \pi_2\bigpare{\exp(-\s H_{p_0})\circ \exp(\s H_p)(x,\l\x)} 
=\pi_2 (S_+(x,\x))
\end{align*}
for $(x,\x)\in\Omega$, and the convergence is uniform in $\Omega$. \newline
(ii) Suppose $(x_0,\x_0)$ be backward nontrapping, and let $-\pi<\s<0$. 
Then there exists a neighborhood $\Omega$ of $(x_0,\x_0)$ such that 
\begin{align*}
&\lim_{\l\to\infty} \pi_1\bigpare{\exp(-\s H_{p_0})\circ \exp(\s H_p)(x,\l\x)} 
=\pi_1 (S_-(x,\x)), \\
& \lim_{\l\to\infty} \l^{-1}  \pi_2\bigpare{\exp(-\s H_{p_0})\circ \exp(\s H_p)(x,\l\x)} 
=\pi_2 (S_-(x,\x))
\end{align*}
for $(x,\x)\in\Omega$, and the convergence is uniform in $\Omega$. 
\end{thm}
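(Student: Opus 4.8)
The plan is to reduce the theorem to Lemma~2.3 by means of the scaling identities (2.1)--(2.2). Observe first that $p_0$ and $p_0^\l$ are precisely the special case $a_{jk}=\d_{jk}$, $V=0$ of $p$ and $p^\l$, so (2.1)--(2.2) hold verbatim with $(p,p^\l)$ replaced by $(p_0,p_0^\l)$. Introducing the scaling map $\F_\l(x,\x)=(x,\l\x)$, these identities read
\[
\exp(tH_p)\circ\F_\l=\F_\l\circ\exp(\l tH_{p^\l}),\qquad
\exp(tH_{p_0})\circ\F_\l=\F_\l\circ\exp(\l tH_{p_0^\l})
\]
for all $t\in\re$ and $\l>0$. (Alternatively, the second identity follows at once from the explicit formula for $\exp(-tH_{p_0^\l})$ recorded in the proof of Lemma~2.2.)

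First I would compose these: taking $t=\s$ in the first identity and $t=-\s$ in the second gives
\[
\exp(-\s H_{p_0})\circ\exp(\s H_p)\circ\F_\l
=\F_\l\circ\exp(-\s\l H_{p_0^\l})\circ\exp(\s\l H_{p^\l}),
\]
hence, for $(x,\x)\in\Omega$ with $\Omega$ the neighbourhood supplied by Lemma~2.3 for the given $\s\in(0,\pi)$,
\[
\exp(-\s H_{p_0})\circ\exp(\s H_p)(x,\l\x)
=\F_\l\bigpare{\exp(-\s\l H_{p_0^\l})\circ\exp(\s\l H_{p^\l})(x,\x)}.
\]
Reading off components, the left-hand side has $\pi_1$-component equal to $\pi_1\bigpare{\exp(-\s\l H_{p_0^\l})\circ\exp(\s\l H_{p^\l})(x,\x)}$ and $\l^{-1}\pi_2$-component equal to $\pi_2\bigpare{\exp(-\s\l H_{p_0^\l})\circ\exp(\s\l H_{p^\l})(x,\x)}$. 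By Lemma~2.3 the argument of $\F_\l$ converges to $S_+(x,\x)$ as $\l\to\infty$, so both limits asserted in (i) follow. For the uniformity on $\Omega$, I would note that the estimates in the proofs of Lemmas~2.1--2.3 are uniform for $(x,\x)$ in a (possibly smaller) neighbourhood of $(x_0,\x_0)$ — the constants appearing there depend only on the energy bound and on $|\pi_1S_+(x,\x)|$, $|\pi_2S_+(x,\x)|$, which are continuous in $(x,\x)$ — so, after shrinking $\Omega$ once more, the convergence in Lemma~2.3, and hence in (i), is uniform.

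Part~(ii) is entirely analogous: as noted at the start of this section, the backward/negative-time versions of Lemmas~2.1--2.3 hold with $S_+$ replaced by $S_-$, and the scaling identities above are valid for negative $t$ as well; repeating the computation with $\s<0$ (so $t=\s$ in the first identity, $t=-\s>0$ in the second) yields the stated limits with $S_-$. The argument is essentially bookkeeping; the only points demanding care are keeping track of where the factor $\l$ enters — the rescaled time $\s\l$ versus the rescaled momentum — and observing that Lemma~2.3 provides uniform, not merely pointwise, convergence on a neighbourhood. I do not expect any genuine obstacle here.
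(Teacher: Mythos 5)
Your proposal is correct and follows essentially the same route as the paper: the paper asserts that the theorem ``follows immediately from Lemma~2.3,'' and what you have supplied is precisely the scaling bookkeeping that makes this reduction explicit --- indeed the composed identity you derive is exactly the relation $S_t\circ\mathcal{J}_\l=\mathcal{J}_\l\circ S^\l_{\l t}$ that the paper records as (2.6) a few lines after the theorem. Your remark about uniformity is a reasonable reading of what the paper leaves implicit.
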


We introduce several notations as a preparation for the proof of our main results. 
We set
\begin{align*}
\ell(t;x,\x) &= (p\circ \exp(tH_{p_0}))(x,\x)  -p_0(x,\x) \\
&= \sum_{j,k=1}^n \bigpare{a_{jk}(\cos(t)x+\sin(t)\x)-\d_{jk}}(-\sin(t)x_j+\cos(t)\x_j)\times \\
&\qquad \qquad \times (-\sin(t)x_k+\cos(t)\x_k) +V(\cos(t)x+\sin(t)\x).
\end{align*}
Then it is easy to show that $\ell(t;x,\x)$ generates the {\em scattering time evolution}\/:
\[
S_t =\exp(-tH_{p_0})\circ \exp(tH_{p}).
\]
Similarly, 
\[
\ell^\l(t;x,\x) = (p^\l\circ \exp(tH_{p_0^\l}))(x,\x)-p_0^\l(x,\x)
\]
generates the time evolution:
\[
S^\l_t =\exp(-tH_{p^\l_0})\circ \exp(tH_{p^\l}).
\]

We denote the scaling with respect to $\x$ by $\mathcal{J}_\l$, i.e., 
\[
\mathcal{J}_\l(x,\x)=(x,\l\x)\quad \text{for $(x,\x)\in\re^{2n}$}.
\]
Then by (2.1) and (2.2), we have 
\[
\exp(tH_p)\circ \mathcal{J}_\l = \mathcal{J}_\l \circ \exp(\l t H_{p^\l}), 
\quad 
\exp(tH_{p_0})\circ \mathcal{J}_\l = \mathcal{J}_\l \circ \exp(\l t H_{p_0^\l}), 
\]
and hence we also have 
\begin{equation}
S_t \circ \mathcal{J}_\l = \mathcal{J}_\l\circ S_{\l t}^\l.
\end{equation}


\section{Proof of main results}

In this section, we mainly concern the case $(x_0,\x_0)$ is forward nontrapping, and 
prove the part (ii) of Theorem~1.1. 

We first consider the property of $e^{itH_0}e^{-itH}$ for $t\leq 0$. 
Let $v_0\in C_0^\infty(\re^n)$, and we consider 
\[
v(t)=e^{itH_0} e^{-itH} v_0. 
\]
Then it is easy to observe 
\begin{align*}
\frac{d}{dt} v(t) &= -i e^{itH_0} (H-H_0) e^{-itH} v_0 \\
&= -i \bigpare{ e^{itH_0} H e^{-itH_0} -H_0} v(t) 
= -i L(t) v(t), 
\end{align*}
where $L(t)=e^{itH_0} H e^{-itH_0}-H_0$. We recall, for any reasonable symbol 
$a(x,\x)$, we have 
\[
e^{itH_0} a^w(x,D_x) e^{-itH_0} = (a\circ \exp(tH_{p_0}))^w(x, D_x),
\]
without the remainder terms, since $p_0(x,\x)$ is a quadratic form in $(x,\x)$, 
and we employ the Weyl quantization. Thus we have 
\[
L(t) =(p\circ \exp(tH_{p_0}))^w(x,D_x) -p_0(x,D_x) = \ell^w(t;x,D_x), 
\]
i.e., $\ell(t;x,\x)$ is the Weyl-symbol of $L(t)$. This is in fact expected, since 
$e^{itH_0}e^{-itH}$ is the quantization of $S_t$. 

Let $\Omega$ be a small neighborhood of $(x_0,\x_0)$ as in the last section, 
and let $f\in C_0^\infty(\Omega)$ be such that $f(x_0,\x_0)>0$, 
and $f(x,\x)\geq 0$ on $\re^{2n}$. We then set
\[
f_h(x,\x)= f(x,h\x), \quad h=\l^{-1}, 
\]
where $h>0$ is our semiclassical parameter. We consider the behavior of 
\[
G(t) = e^{itH_0} e^{-itH} f^w(x,h D_x) e^{itH}e^{-itH_0}
\]
as $h\to 0$. The operator valued function $G(t)$ satisfies the Heisenberg equation:
\begin{equation}
\frac{d}{dt} G(t) = -i [L(t), G(t)], \quad G(0)= f^w(x,h D_x).
\end{equation}
The corresponding canonical equation of the classical mechanics is 
\[
\frac{\pa \g_0}{\pa t}(t;x,\x) = -\{\ell,\g_0\}(t;x,\x), 
\quad \g(0;x,\x)= f(x,h \x)
\]
and the solution is given by 
\[
\g_0(t;x,\x) = (f_h\circ S_t^{-1})(x,\x)
\]
since $S_t$ is the Hamilton flow generated by $\ell(t;x,\x)$. 
Now we note 
\[
f_h(x,\x) = f(x,\x/\l)= (f\circ \mathcal{J}_\l^{-1})(x,\x).
\]
Hence, recalling (2.6), we learn
\begin{align*}
f_h\circ S_t^{-1} &= f\circ \mathcal{J}_\l^{-1}\circ S_t^{-1} 
= f\circ (S_t\circ \mathcal{J}_\l)^{-1} \\
&= f\circ (\mathcal{J}_\l \circ S^\l_{\l t})^{-1} 
= f\circ (S_{\l t}^\l)^{-1}\circ \mathcal{J}_\l^{-1}.
\end{align*}
In other words, we have 
\[
\g_0(t;x,\x) = (f_h \circ S_t^{-1})(x,\x) = (f\circ (S_{\l t}^\l)^{-1})(x,h\x). 
\]
We expect 
\[
G(t)\sim \g_0(t;x,D_x) = (f\circ (S_{\l t}^\l)^{-1})(x,h D_x)
\]
for small $h>0$, and we construct the asymptotic solution to the 
Heisenberg equation (3.1) with the principal symbol $\g_0(t;x,\x)$. 

\begin{lem}
Let $-\pi<t_0<0$, and set $I=[t_0,0]$. There exists $\g(t;x,\x)\in C_0^\infty(\re^{2n})$
for $t\in I$ such that 
\begin{enumerate}
\renewcommand{\theenumi}{\roman{enumi}}
\renewcommand{\labelenumi}{(\theenumi)} 
\item $\g(0;x,\x) = f(x,h\x)$. 
\item $\g(t;x,\x)$ is supported in $S_t\circ \mathcal{J}_\l(\Omega) = \mathcal{J}_\l
\circ S_{\l t}^\l (\Omega)$. 
\item For any $\a,\b\in\ze_+^n$, there is $C_{\a\b}>0$ such that 
\[
\bigabs{\pa_x^\a \pa_\x^\b \g(t;x,\x)}\leq C_{\a\b}h^{|\b|}, 
\quad t\in I, x,\x\in\re^n.
\]
\item The principal symbol of $\g$ is given by $\g_0$, i.e., 
for any $\a,\b\in\ze_+^n$, there is $C_{\a\b}>0$ such that 
\[
\bigabs{\pa_x^\a \pa_\x^\b (\g(t;x,\x)-\g_0(t;x,\x))}\leq C_{\a\b}h^{1+|\b|}, 
\quad t\in I, x,\x\in\re^n.
\]
\item If we set $G(t)=\g^w(t;x,D_x)$, then 
\[
\biggnorm{\frac{d}{dt} G(t) +i [L(t),G(t)]}_{\mathcal{L}(\mathcal{H})} 
=O(h^\infty)
\]
as $h\to 0$, uniformly in $t\in I$.  
\end{enumerate}
\end{lem}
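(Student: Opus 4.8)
The plan is to construct $\g(t;x,\x)$ as an asymptotic series in $h$, solving the transport equations that arise from the symbolic calculus applied to the Heisenberg equation (3.1). First I would set up the semiclassical Weyl calculus adapted to the scaled symbols: because of property (ii) the relevant symbols live in the class $S(1,g)$ with the metric $g = dx^2 + h^2 d\x^2$ (equivalently, $\g(t;x,\x) = b(t;x,h\x)$ with $b$ in an $h$-independent class whose support is $S^\l_{\l t}(\Omega)$), and $L(t) = \ell^w(t;x,D_x)$ has Weyl symbol $\ell(t;x,\x)$ which, by the short-range Assumption~A and the estimates established in Section~2, has good decay on the support of $\g$. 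The key point is that on $S_t \circ \mathcal{J}_\l(\Omega)$ one has (by Lemma~2.1, rescaled) that $\abs{x} \gtrsim \l$, so each $x$-derivative of $a_{jk} - \d_{jk}$ or of $V$ gains a power of $\jap{x}^{-1} \lesssim \l^{-1} = h$; this is exactly what makes the remainders in the composition formula $O(h^\infty)$.

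Next I would write $\g(t) = \sum_{k=0}^{N} h^k \g_k(t;x,\x)$ (truncated at order $N$, with $N$ eventually sent to infinity in the usual Borel-summation sense), plug into (3.1), and expand $[L(t), G(t)]$ using the Weyl composition formula (the Moyal bracket). The symbol of $\frac{i}{h}[L(t),G(t)]$ is $\frac1h\bigpare{\ell \# \g - \g \# \ell}$, whose expansion in powers of $h$ has leading term $\{\ell, \g\}$ (the Poisson bracket), next term of order $h^2$ times the third-order Moyal term, etc. Matching powers of $h$: the order-$h^0$ equation is $\pa_t \g_0 = -\{\ell, \g_0\}$ with $\g_0(0) = f_h$, which is solved by transport along $S_t$ as already indicated in the text, giving $\g_0(t) = f_h \circ S_t^{-1} = (f \circ (S^\l_{\l t})^{-1})(x,h\x)$; each subsequent $\g_k$ solves an inhomogeneous transport equation $\pa_t \g_k + \{\ell, \g_k\} = (\text{expression in } \g_0,\dots,\g_{k-1})$ with zero initial data, solved by integrating along the flow $S_t$. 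One checks inductively that $\g_k(t;x,\x) = b_k(t;x,h\x)$ with $b_k$ supported in $S^\l_{\l t}(\Omega)$ and bounded with all derivatives uniformly in $\l$ — this is where the decay of $\ell$ and its derivatives on the support (the $\jap{x}^{-1}\lesssim h$ gain) is used repeatedly to see that the right-hand sides are genuinely of the claimed size, yielding properties (ii), (iii), (iv) term by term. Property (i) holds by construction, and the flow $S_t$ is well-defined and smooth on $I$ because $\ell(t;x,\x)$ is smooth and, on the relevant region, a bounded perturbation (in the appropriate scaled sense) of the quadratic generator.

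Finally, for (v) I would set $G(t) = \g^w(t;x,D_x)$ with $\g$ the Borel sum of the series, so that by the transport equations the symbol of $\frac{d}{dt}G(t) + i[L(t),G(t)]$ is $O(h^{N})$ in $S(1,g)$ for every $N$, hence $O(h^\infty)$; the Calder\'on–Vaillancourt theorem (uniform in the parameters, since all seminorms are controlled) converts this symbol bound into the operator-norm bound $\norm{\,\cdot\,}_{\bounded(\mathcal{H})} = O(h^\infty)$, uniformly in $t \in I$ because $I$ is compact and all estimates are uniform in $t$.

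The main obstacle I anticipate is bookkeeping the two competing small/large parameters correctly: the symbol $\ell^\l(t;x,\x)$ contains factors of $\l$ (from $\frac{\l}{2}\pa a_{ij}/\pa x_k$-type terms appearing after scaling, cf.\ the computations (2.4)–(2.5)), and one must verify that after composing with $\g$, which is supported where $\jap{x} \gtrsim \l$, every such $\l$ is beaten by a corresponding $\jap{x}^{-1}$ or by the $h = \l^{-1}$ prefactors, so that each term in the Moyal expansion really does carry the advertised power of $h$. In other words, the delicate part is not the formal transport-equation recursion but checking that the scaled short-range structure survives all the derivatives taken in the symbolic calculus — precisely the high-energy analysis of Section~2 is what guarantees this, and the proof of (iii)–(v) is essentially an inductive unwinding of those estimates.
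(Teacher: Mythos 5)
Your overall skeleton — transport equations obtained by matching powers of $h$ in the Moyal expansion, Borel summation, Calder\'on--Vaillancourt for item (v) — is the standard construction and matches what the paper does: the paper itself gives no details, simply referring to Lemma~4 of \cite{Na1} and recording only two facts that make that argument transfer, namely that $\ell(t;\cdot,\cdot)\in S(\jap{\x}^2, dx^2+d\x^2/\jap{\x}^2)$ locally in $x$, and that $\pi_1(\supp\g_0(t;\cdot,\cdot))$ stays in a compact set (uniformly in $\l$), the latter following from $S^\l_{\l t}\to S_+$.

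However, your stated ``key point'' is wrong, and it directly contradicts the second of those two facts. You claim that on $S_t\circ\mathcal{J}_\l(\Omega)$ one has $|x|\gtrsim\l$, citing Lemma~2.1. But $S_t\circ\mathcal{J}_\l(\Omega)=\mathcal{J}_\l\circ S^\l_{\l t}(\Omega)$, and $S^\l_{\l t}(\Omega)$ converges to $S_+(\Omega)$ by Lemma~2.3 / Theorem~2.4; so on the support of $\g_0(t)$ the $x$-component is \emph{bounded} independently of $\l$, while the $\x$-component is of size $\l$. Lemma~2.1 is about $\pi_1(\exp(sH_{p^\l})(x,\x))$, i.e.\ the actual trajectory position, which is $\gtrsim s$; this is \emph{not} the $x$-variable of the symbol after the $\exp(-tH_{p_0})$ rotation. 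What is large on the support is the argument $\cos t\,x+\sin t\,\x$ at which $a_{jk}$ and $V$ are evaluated inside $\ell(t;x,\x)$, not the $x$ of the symbol. Consequently, a naive ``$\jap{x}^{-1}\lesssim h$ per $x$-derivative'' bookkeeping, applied to the symbol variable, fails: $x$-derivatives of $\ell$ gain nothing. What actually produces the powers of $h$ in the Moyal remainders is the combination of (a) $\pa_\x^\b\g = O(h^{|\b|})$ from the scaling $\g=b(x,h\x)$, and (b) $\pa_\x^\b\ell=O(\jap{\x}^{2-|\b|})$ from the symbol-class bound, together with $\jap{\x}\sim\l=h^{-1}$ on $\supp\g$; this yields a gain of $h$ per \emph{$\x$}-derivative on either factor, and gives the $(2k+1)$-st Moyal bracket size $O(h^{2k-1})$. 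The role of the decay of $a_{jk}-\d_{jk}$ and $V$ in Assumption~A is to put $\ell$ in the class $S(\jap{\x}^2,\,dx^2+d\x^2/\jap{\x}^2)$ at all (locally in $x$ — without the decay, the $|x|^2$-growth of $V$ would spoil it), and the compact $x$-support of $\g_0$ is exactly what permits the ``locally in $x$'' qualifier. So the ingredients are there in Section~2, but the mechanism you describe misidentifies where the smallness comes from, and if you tried to carry out the inductive estimates with $|x|\gtrsim\l$ as the working assumption, the construction as written would not close.
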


\begin{proof}
Given the classical mechanical construction above, the construction of the 
asymptotic solution is quite similar to (or slightly simpler than) 
the proof of Lemma~4 of \cite{Na1}. 
We note $\ell(t;x,\x)\in S(\jap{\x}^2, dx^2+d\x^2/\jap{\x}^2)$ locally in 
$x$, and $\pi_1(\supp(\g_0(t;\cdot,\cdot)))$ is contained in a compact set by virtue of the 
asymptotic property: $S_{\l t}^\l \sim S_+$ as $\l\to\infty$. We omit the detail. 
\end{proof}

Now the proof of Theorem~1.1 is almost the same as the proof of Theorem~1 of 
\cite{Na1}, and we simply refer the reader to the paper. 

Finally, we show Theorem~1.3 follows from Theorem~1.1. 

\begin{proof}[Proof of Theorem 1.3]
We prove the part (i) only. 
We note 
\begin{align}
\WF(e^{-i(\pi/2)H_0}u)&= \WF(\hat u), \\
\WF(e^{i(\pi/2)H_0} u)&= \WF(\check u) =\Gamma(\WF(\hat u)).
\end{align}
In fact, $e^{-i(\pi/2)H_0}$ is the Fourier transform. 
(See also Remark~1.2.) 

We set $(x',\x')= S_+^{-1}\circ \Gamma\circ S_-(x_0,\x_0)$ so that 
\begin{equation}
(x_-,\x_-) =\Gamma(x_+',\x_+'), \quad \text{where } (x_+',\x_+')=S_+(x',\x'). 
\end{equation}
By Theorem~1.1 (i) with $t_0=\pi/2$, and $u(\pi/2)$ as the initial condition, we have 
\begin{align*}
(x_0,\x_0)\in \WF(u(\pi)) 
\quad &\Longleftrightarrow \quad
(x_-,\x_-)\in \WF(e^{-i(\pi/2)H_0} u(\pi/2)) \\
\quad &\Longleftrightarrow \quad
(x_-,\x_-)\in \WF(\hat u(\pi/2)).
\end{align*}
We have used (3.2) in the second step. 
On the other hand, by Theorem~1.1 (ii) with $t_0=-\pi/2$, and $u(\pi/2)$ 
as the initial condition, and using (3.3), we also have 
\begin{align*}
(x',\x')\in \WF(u_0) 
\quad \Longleftrightarrow \quad
(x'_+,\x'_+)\in \ &\WF(e^{i(\pi/2)H_0} u(\pi/2)) \\
&= \WF(\check u(\pi/2)) =\Gamma(\WF(\hat u(\pi/2))). \end{align*}
By (3.4), this implies the claim of Theorem~1.3 (i). 
The part (ii) is proved similarly. 
\end{proof}


\section{Inhomogeneous harmonic oscillators}

Here we consider the case when the harmonic potential is inhomogeneous, i.e., 
\[
H_0= -\frac12 \triangle +\frac12 \sum_{i,j=1}^n b_{ij} x_ix_j,
\]
with a positive symmetric matrix  $(b_{ij})$, and
\[
H= -\frac12 \sum_{i,j=1}^n \pa_{x_i} a_{ij}(x) \pa_{x_j} 
+\frac12 \sum_{i,j=1}^n b_{ij} x_ix_j +V(x).
\]
We assume $(a_{jk}(x))$ and $V(x)$ satisfy Assumption~A. 
By an orthogonal transform, we can diagonalize  the harmonic potential, and 
hence we may assume $\sum b_{ij} x_i x_j =\sum_{j=1}^n \n_j^2 x_j^2$, where 
$\n_j^2>0$, $j=1,\dots, n$, are eigenvalues of $(b_{ij})$. The behavior of the 
inhomogeneous harmonic oscillator depends on the number theoretical properties 
of $(\n_j)_{j=1}^n$. If there exist no $t_0>0$ such that 
\begin{equation}\label{res}
t_0 \n_j \in\pi\ze, \quad j=1,\dots, n,
\end{equation}
then it is well-known that the recurrence of the evolution operator does not 
occur, i.e., there are no $t_0\neq 0$ such that $e^{-itH_0}=I$. 
In this case we have the following result:

\begin{thm}
Suppose $(x_0,\x_0)$ is backward nontrapping, and suppose that 
there are no $t_0>0$ such that (\ref{res}) hold. Then for any $t>0$, 
\[
(x_0,\x_0)\in \WF(e^{-itH} u_0) \quad \Longleftrightarrow \quad 
(x_-,\x_-)\in \WF(e^{-itH_0} u_0).
\]
\end{thm}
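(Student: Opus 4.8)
The plan is to rerun the architecture of Sections~2 and~3 with the diagonalized inhomogeneous oscillator $H_0=-\tfrac12\triangle+\tfrac12\sum_j\n_j^2x_j^2$ in place of the homogeneous one; since Theorem~4.1 is exactly Theorem~1.1(i) with the constraint $t_0<\pi$ removed, the only genuinely new ingredient will be an extension of the Section~2 scaling analysis so that it delivers the convergence of the scattering evolution at \emph{all} times rather than only for $|t|<\pi$. First I would put $p^\l(x,\x)=\tfrac12\sum_{j,k}a_{jk}(x)\x_j\x_k+\tfrac{1}{2\l^2}\sum_j\n_j^2x_j^2+\tfrac{1}{\l^2}V(x)$ and $p_0^\l(x,\x)=\tfrac12|\x|^2+\tfrac{1}{2\l^2}\sum_j\n_j^2x_j^2$, so that the conjugation identities (2.1)--(2.2) and the intertwining (2.6) hold verbatim; now $\exp(-sH_{p_0^\l})$ is block diagonal, its $j$-th block being rotation at frequency $\n_j/\l$. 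Lemma~2.1 carries over without change, since its proof uses only conservation of $p^\l$, ellipticity of the principal symbol, the crude bounds $|y^\l|\leq C\l$ and $|\y^\l|\leq C$, and the convexity argument, none of which distinguishes $|x|^2$ from $\sum_j\n_j^2x_j^2$; and Lemma~2.2 carries over because for $|s|\leq\l\d$ with $\d$ small every angle $\n_j s/\l$ is small, so $\exp(-sH_{p_0^\l})$ is uniformly $C^1$-close on compact sets to the free translation flow and the dominated-convergence identification of $\lim_{\l\to\infty}S^\l_s$ with $S_\pm$ is unaffected.

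The substantive step is the analogue of Lemma~2.3 and Theorem~2.4 with no bound on $|\s|$: $\lim_{\l\to\infty}S^\l_{\s\l}(x,\x)=S_-(x,\x)$, uniformly for $(x,\x)$ near $(x_0,\x_0)$. In the homogeneous case the restriction $|\s|<\pi$ was dictated by the need to keep $\sin(s/\l)$ --- equivalently the configuration component $\pi_1(S^\l_s(x_0,\x_0))$ of the free-flowed trajectory --- of size at least $c\l$ for a fixed $c>0$, so that $\jap{y^\l(s)}^{-\m}=O(\l^{-\m})$ and the correction integrals are $O(\l^{1-\m})=o(1)$. Here, along a trajectory that stays near $(x_-,\x_-)$, that configuration component behaves to leading order like $\l\sum_j\n_j^{-1}\sin(\n_j s/\l)(\x_-)_j e_j$, whose squared norm is $\l^2\Phi(s/\l)+O(\l)$ with $\Phi(r):=\sum_j\n_j^{-2}\sin^2(\n_j r)(\x_-)_j^2$ a nonnegative real-analytic function, not identically zero since $|\x_-|^2=2k(x_0,\x_0)>0$. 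This is exactly where the hypothesis on (\ref{res}) enters: if (\ref{res}) fails for every $t_0>0$, then $e^{-itH_0}$, which microlocally rotates the $j$-th block by the angle $\n_j t$, never acts simultaneously as $\pm1$ on all of the blocks, so the full rescaled trajectory can reach a bounded neighbourhood of the origin only at the (finitely many, on any compact $[\,0,\s\,]$) zeros of $\Phi$, and these occur only when $\x_-$ lies in a resonant coordinate subspace. Away from shrinking neighbourhoods of those ``return times'' $s_i\l$ one has $|\pi_1(S^\l_s(x_0,\x_0))|\geq c\l$, and the proof of Lemma~2.3 transfers line by line, propagating $S^\l_s\to S_-$ across each gap.

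The hard part will be the behaviour near the return times, which has no counterpart in \cite{Na1}, where a high-energy classical trajectory leaves the interaction region once and for all; here a trajectory whose asymptotic momentum lies in a resonant subspace revisits the region where the short-range perturbation is active. I would handle this by induction on the ordered return times $0<s_1<\dots<s_N\leq\s$: after re-deriving the block form of the formulas (2.4)--(2.5) and bootstrapping on $z^\l$, one shows that near $s_i\l$ the motion in the resonant block is governed by the exact harmonic rotation of that block, the perturbation contributing only a fixed localized correction that is matched, via the $\pm1$ sign factors of the half-period rotations, by the action of $e^{-itH_0}$ on the same block --- so that $S_-(x,\x)$ and, after composition with $e^{-itH_0}$, the characterisation of $\WF(e^{-itH_0}u_0)$ are unchanged across each $s_i$, and Lemma~2.2 may be restarted on $[\,s_i\l,s_{i+1}\l\,]$. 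Granting this all-times version of Theorem~2.4, the construction of the asymptotic solution of the Heisenberg equation (Lemma~3.1, now with $I=[\,0,t_0\,]$) and the deduction of the wave-front characterisation are exactly as in Section~3 and \cite{Na1}, with $t_0>0$ arbitrary; this gives Theorem~4.1.
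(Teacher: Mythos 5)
Your overall architecture matches the paper's intent: the paper omits the proof, saying only that it is ``similar to Theorems 1.1 and 1.3,'' so the real content is exactly what you set out to supply, namely an all-times version of Lemma~2.3/Theorem~2.4 for the inhomogeneous oscillator. Your setup of $p^\l$, $p_0^\l$, the observation that (2.1)--(2.2) and (2.6) persist, and your claim that Lemmas~2.1 and~2.2 transfer verbatim are all correct. Also correct is the central observation that the obstruction to extending Lemma~2.3 past a bounded range of $\s$ is the possible vanishing of $\Phi(r)=\sum_j\n_j^{-2}\sin^2(\n_j r)(\x_-)_j^2$, and that if $(\x_-)_j\neq 0$ for \emph{every} $j$ then the non-resonance hypothesis gives $\Phi>0$ on $(0,\infty)$, hence $\inf_{[\d,\s]}\Phi>0$ on any compact interval, so the bound $|y^\l(t)|\geq\d_1\l$ for $\d\l\leq t\leq\s\l$ and the bootstrap of Lemma~2.3 go through for every $\s>0$, uniformly on a small neighbourhood $\O$. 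Under that nondegeneracy condition your argument is a genuine proof, and is presumably what the authors had in mind.

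The gap is in the final paragraph. When $(\x_-)_j=0$ for some indices $j$ and the $\n_j$ over the remaining ``active'' set admit a common $\pi$-period $r_1$, the rescaled trajectory does re-enter a bounded neighbourhood of the origin near $t=r_1\l$, as you note. But your proposed resolution --- that near $s_i\l$ the motion ``is governed by the exact harmonic rotation,'' with ``a fixed localized correction that is matched, via the $\pm1$ sign factors\ldots by the action of $e^{-itH_0}$'' --- does not withstand inspection. Near such a return time $|y^\l|=O(1)$ and $|\y^\l|=O(1)$, and the analogue of (2.5) contains the term $-\cos(\n_k t/\l)\frac12\sum_{i,j}\frac{\pa a_{ij}}{\pa x_k}(y^\l)\,\y^\l_i\y^\l_j$, which is $O(1)$ on a $t$-window of length $O(1)$. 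The variable-metric perturbation therefore produces an $O(1)$ change in $\z^\l$ across the return --- this is a genuine additional classical scattering event, not an $o(1)$ correction, and there is no ``matching'' by $e^{-itH_0}$: the scattered outgoing data is determined by the diffeomorphism $S_+\circ S_-^{-1}$ evaluated at the re-entry point, not by a block-sign flip. (With a flat metric $a_{jk}=\d_{jk}$ the offending term vanishes and the correction from $V$ alone is $O(\l^{-1})$, so the return does become harmless --- but the theorem is stated for variable metrics.) So your induction, as described, does not close, and as far as I can tell the degenerate case requires either an additional hypothesis (that $(\x_-)_j\neq 0$ for every $j$, which is the generic case and makes the whole proof routine) or a substantially new argument that you have not supplied. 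Since the paper itself gives no proof of Theorem~4.1, it is not possible to say that you have deviated from the authors' argument, but the step you flag as ``the hard part'' remains open in your write-up.
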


Obviously, an analogous result holds for $t<0$, but we omit it here. 

If there exists $t_0>0$ such that (\ref{res}) holds, then we have the following result:

\begin{thm}
Let $t_0>0$ be the smallest positive number satisfying (\ref{res}), 
and let $m_j=t_0\n_j/\pi\in \ze$. We set 
\[
\tilde \Gamma (x_1,\dots,x_n,\x_1,\dots,\x_n) =
(\s_1x_1,\dots, \s_n x_n,\s_1\x_1,\dots,\s_n\x_n)
\]
for $(x,\x)\in\re^{2n}$, 
where $\s_j=1$ if $m_j$ is even, and $\s_j=-1$ if $m_j$ is odd. 
Suppose $(x_0,\x_0)$ is backward nontrapping. Then for $0<t<t_0$, 
\[
(x_0,\x_0)\in \WF(e^{-itH} u_0) \quad \Longleftrightarrow \quad 
(x_-,\x_-)\in \WF(e^{-itH_0} u_0), 
\]
and 
\[
(x_0,\x_0)\in \WF(e^{-it_0H} u_0) \quad \Longleftrightarrow \quad 
S_+^{-1}\circ \tilde \Gamma \circ S_-(x_0,\x_0)\in \WF(u_0).
\]
\end{thm}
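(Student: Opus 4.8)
The plan is to mimic the structure of Section~2 and Section~3, replacing the isotropic harmonic oscillator by the anisotropic one and tracking how the scaling interacts with the several frequencies $\n_j$. First I would set up the rescaled symbols: for $\l>0$ put
\[
p^\l(x,\x) = \frac12\sum a_{jk}(x)\x_j\x_k + \frac{1}{2\l^2}\sum \n_j^2 x_j^2 + \frac{1}{\l^2}V(x),
\qquad
p_0^\l(x,\x)=\frac12|\x|^2 + \frac{1}{2\l^2}\sum \n_j^2 x_j^2,
\]
so that the identities $\pi_1(\exp(tH_p)(x,\l\x))=\pi_1(\exp(\l t H_{p^\l})(x,\x))$ and $\pi_2(\exp(tH_p)(x,\l\x))=\l\,\pi_2(\exp(\l t H_{p^\l})(x,\x))$ still hold verbatim; the harmonic part of $p^\l$ is again $O(\l^{-2}\jap{y^\l}^2)$, so Lemma~2.1 goes through unchanged (the anisotropy only changes constants). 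With $\exp(-tH_{p_0^\l})$ now a direct sum of one-dimensional rotations with angular speeds $\n_j/\l$, the components of $z^\l(t),\z^\l(t)$ acquire factors $\cos(\n_j t/\l)$, $\sin(\n_j t/\l)$, but the crucial estimates $|dz^\l/dt|=O(\jap{t}^{-\m})$, $|d\z^\l/dt|=O(\jap{t}^{-\m-1})$ of Lemma~2.2 survive because each $\sin(\n_j t/\l)$ is still $O(t/\l)$ and each $\cos$ is bounded. Hence Lemmas~2.2 and~2.3 hold with $S_+$ unchanged, provided $0<\s<t_0$ in the anisotropic case — here $t_0$ replaces $\pi$ as the length of the interval on which all the relevant $\sin(\n_j \cdot/\l)$-arguments stay below $\pi$, i.e.\ on which $\l\sin(\n_j t/\l)$ does not return through a zero of one of the $\x$-components. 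This gives the anisotropic analogue of Theorem~2.4 on $(0,t_0)$.

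Next I would run the Section~3 argument. The key observation carrying Section~3 over is that $e^{itH_0}a^w e^{-itH_0}=(a\circ\exp(tH_{p_0}))^w$ exactly (no remainder), which holds for \emph{any} quadratic $p_0$, in particular the inhomogeneous one; thus $L(t)=\ell^w(t;x,D_x)$ with $\ell(t;x,\x)=(p\circ\exp(tH_{p_0}))(x,\x)-p_0(x,\x)$, and the scattering flow $S_t=\exp(-tH_{p_0})\circ\exp(tH_p)$ still satisfies $S_t\circ\mathcal J_\l=\mathcal J_\l\circ S^\l_{\l t}$. The construction of the asymptotic solution $G(t)=\g^w(t;x,D_x)$ to the Heisenberg equation (Lemma~3.1) then works for $t\in[t_0',0]$ with any $t_0'>-t_0$, since the support control again comes from $S^\l_{\l t}\to S_+$ uniformly. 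Propagating $f^w(x,hD_x)$ along $e^{-itH}$ for $0<t<t_0$ and comparing $\WF$ gives the first equivalence of Theorem~4.3 exactly as in Theorem~1.1(ii). For the endpoint $t=t_0$ I would not try to build the construction up to $t_0$ directly, but instead split $e^{-it_0H}=e^{-it_0H_0}\cdot(e^{it_0H_0}e^{-it_0H})$ and use the time-$t_0$ scattering operator together with the two-sided statement at interior times, exactly as in the proof of Theorem~1.3 from Theorem~1.1 — except that the role of the Fourier transform (the time-$\pi/2$ propagator of the isotropic oscillator) is now played by the half-period propagators of the individual one-dimensional oscillators.

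The decisive new point — and the reason the antipodal map $\Gamma$ gets replaced by $\tilde\Gamma$ — is the following algebraic fact about $e^{-it_0H_0}$. Since $H_0=\bigoplus_j(-\frac12\pa_{x_j}^2+\frac12\n_j^2x_j^2)$ and the $j$-th one-dimensional oscillator has period $2\pi/\n_j$ with its half-period operator acting by $x_j\mapsto -x_j$, $\x_j\mapsto -\x_j$ (a partial "parity"), we get that $e^{-it_0H_0}$ is microlocally the linear symplectic map sending $(x_j,\x_j)\mapsto(\s_j x_j,\s_j\x_j)$ with $\s_j=(-1)^{m_j}$, $m_j=t_0\n_j/\pi$: rotating the $j$-th plane by angle $\n_j t_0=m_j\pi$ is the identity if $m_j$ is even and the half-turn $\Gamma_j$ if $m_j$ is odd. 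Hence $\WF(e^{-it_0H_0}v)=\tilde\Gamma(\WF(v))$. Then, writing $(x',\x')=S_+^{-1}\circ\tilde\Gamma\circ S_-(x_0,\x_0)$, one chains: $(x_0,\x_0)\in\WF(e^{-it_0H}u_0)$ iff (by the interior-time two-sided statement applied with midpoint $t_0/2$ — or more simply by the first equivalence of the theorem read backwards and the identity $e^{-it_0H}=e^{-it_0H_0}(e^{it_0H_0}e^{-it_0H})$ together with $\WF(e^{it_0H_0}e^{-it_0H}u_0)=S_-$-type relation) the corresponding point lies in $\WF(u_0)$; bookkeeping the $\tilde\Gamma$ exactly as $\Gamma$ was bookkept in the proof of Theorem~1.3 yields $(x',\x')\in\WF(u_0)$.

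The main obstacle I anticipate is \emph{precisely} this endpoint argument: at $t=t_0$ the rescaled free flow $\exp(-\s\l H_{p_0^\l})$ has some $\x_j$-components passing through zeros of $\sin(\n_j t/\l)$ at times $\l\pi/\n_j$ inside the interval, so one cannot simply push the uniform asymptotics $S^\l_{\l t}\to S_+$ through $t_0$ — the positions $|y^\l|$ need not stay $\gtrsim \l$ near those instants. The clean way around this is exactly the decomposition used for Theorem~1.3: never integrate across $t_0$, but factor the time-$t_0$ propagator through the exact quadratic propagator $e^{-it_0H_0}$ (whose microlocal action is the explicit linear map $\tilde\Gamma$) composed with a product of interior-time perturbed propagators to which the already-proved interior statement applies. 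Verifying that this factorization is consistent with the two applications of the interior result (one forward, one backward) and that the $S_\pm$ and $\tilde\Gamma$ compose in the stated order is the only genuinely delicate bookkeeping; the estimates in Section~2, as noted, are insensitive to the anisotropy. A secondary technical check is that the neighborhood $\Omega$ and the constant $\d$ in Lemma~2.1 can be chosen uniformly given that several frequencies are present, but this is immediate since the bad term is still $O(\l^{-2}\jap{y^\l}^2)+O(\jap{y^\l}^{-\m})$ with constants depending only on $\max_j\n_j$ and the symbol bounds.
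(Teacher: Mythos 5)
Your proposal matches the paper's own (very terse) argument: both reduce to rerunning the proofs of Theorems~1.1 and 1.3 in the anisotropic setting and both single out the new ingredient, namely that $e^{-it_0H_{0,j}}=\mathcal{F}^{2m_j}$ on each one--dimensional factor so that $e^{-it_0H_0}$ acts by $u\mapsto u(\tilde\Gamma\cdot)$ and hence microlocally by $\tilde\Gamma$. Your observations that Egorov's theorem remains exact for the still-quadratic anisotropic $H_0$, that the scaling identity $S_t\circ\mathcal J_\l=\mathcal J_\l\circ S^\l_{\l t}$ carries over verbatim, and that the $t=t_0$ statement should be obtained by the same two-sided splitting through the exact propagator $e^{-it_0H_0}$ as in the proof of Theorem~1.3, are all correct and are exactly what the paper's one-sentence remark asserts.

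There is, however, a weak spot in your justification of the anisotropic Lemma~2.3 on $(0,t_0)$. You claim $t_0$ replaces $\pi$ ``as the length of the interval on which all the relevant $\sin(\n_j\cdot/\l)$-arguments stay below $\pi$''; that is not so, since $\n_j\s$ first reaches $\pi$ at $\s=\pi/\n_j$, which can be well below $t_0$. What actually controls $\l^{-1}|y^\l(t)|$, while $(z^\l,\z^\l)$ stays near $(x_+,\x_+)$, is $g(u)=\sum_j\n_j^{-2}\sin^2(\n_ju)\,\x_{+,j}^2$ at $u=t/\l$, and $g$ vanishes exactly at the common zeros of $\sin(\n_ju)$ over $j\in J=\{j:\x_{+,j}\neq 0\}$, i.e.\ at multiples of the $J$-resonance time $t_J$. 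If $J=\{1,\dots,n\}$ then $t_J=t_0$ and the argument runs as you say; but if some $\x_{+,j}=0$ one can have $t_J<t_0$, the lower bound $|y^\l(t)|\gtrsim\l$ used in Lemma~2.3 can fail near $t\approx t_J\l<t_0\l$, and a secondary near-origin passage (hence a possible secondary scattering contribution to $z^\l,\z^\l$) can occur strictly before $t_0$. Your proposed workaround --- never integrating across $t_0$, factoring through $e^{-it_0H_0}$ --- handles the endpoint but not these interior recurrences. The paper itself does not address this either, so you are not behind it; but the reason you give for the $\s<t_0$ estimates is wrong as stated, and the degenerate configuration $J\subsetneq\{1,\dots,n\}$ requires either a genuine extra argument or a non-degeneracy hypothesis on $(x_0,\x_0)$.
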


The proofs of these theorems are similar to Theorems~1.1 and 1.3,  and 
we omit the detail. We only note the fact that 
\[
\exp\biggbrac{-it_0\biggpare{-\frac12\frac{d^2}{dx^2} +\n_j^2\frac{x^2}{2}}} u(x) 
=(\mathcal{F}^{2m_j} u)(x) = u(\s_j x)
\]
for $u\in L^2(\re)$, $j=1,\dots, n$.


\end{document}